\theoremstyle{plain}
\author{Edvard Aksnes}
\address{Department of Mathematics, University of Oslo, Oslo, Norway.}
\email{\href{edvardak@math.uio.no}{edvardak@math.uio.no}}
\author{Omid Amini}
\address{CNRS - CMLS, École polytechnique, Institut polytechnique de Paris.}
\email{\href{omid.amini@polytechnique.edu}{omid.amini@polytechnique.edu}}
\author{Matthieu Piquerez}
\address{LS2N, Inria, Nantes Université}
\email{\href{matthieu.piquerez@univ-nantes.fr}{matthieu.piquerez@univ-nantes.fr}}
\author{Kris Shaw}
\address{Department of Mathematics, University of Oslo, Oslo, Norway.}
\email{\href{krisshaw@math.uio.no}{krisshaw@math.uio.no}}
\title{Cohomologically tropical varieties}
\begin{document}

\begin{abstract}
    Given the tropicalization of a complex subvariety of the torus,
    we define a morphism between the tropical cohomology and the rational cohomology of their respective tropical compactifications. We say that the subvariety of the torus is cohomologically tropical if this map is an isomorphism for all closed strata of the tropical compactification.

    We prove that a schön subvariety of the torus is cohomologically tropical if and only if it is wunderschön and its tropicalization is a tropical homology manifold. The former property means that the open strata in
    the boundary of a tropical compactification are all connected and the mixed Hodge structures on their cohomology are pure of maximum possible weight; the latter property requires that, locally, the tropicalization verifies tropical Poincaré duality.

    We study other properties of cohomologically tropical and wunderschön varieties, and show that in a semistable degeneration to an arrangement of cohomologically tropical varieties, the Hodge numbers of the smooth fibers are captured in the tropical cohomology of the tropicalization. This extends the results of Itenberg, Katzarkov, Mikhalkin, and Zharkov.
\end{abstract}

\maketitle

\setcounter{tocdepth}{1}
\tableofcontents

\section{Overview}\label{sec:overview}

The tropicalization process transforms algebraic varieties into piecewise polyhedral objects. While losing part of the geometry, some of the invariants, such as dimension and degree, of the original variety can still be computed from its tropicalization. For the complement of a hyperplane arrangement, Zharkov shows that the tropical cohomology of the tropicalization computes the usual cohomology of the variety \cite{Zha13}. Moreover, Hacking relates the top-weight mixed Hodge structure of a variety to the homology of its tropicalization \cite{Hac08}. We are interested in determining for which varieties the tropicalization remembers the cohomology. Part of our motivation to study this question comes from the work of Deligne~\cite{Del-md} in which he gives a Hodge-theoretic characterization of maximal degenerations of complex algebraic varieties. We are more specifically interested in determining how tropicalization is related to maximal degenerations, as this question is intimately related to open problems on large scale limits of families of complex algebraic varieties, see Section~\ref{sec:discussion-max-degenerate} for a discussion.   

We introduce the relevant concepts and notation before stating our results. Let $N$ be a lattice of rank $n$, $M$ the dual lattice, and $\varT = \Spec(\C[M]) \cong (\C^*)^{n}$ the corresponding torus. We let $N_{\R}$ and $N_{\Q}$ denote $N \otimes_\Z \R$ and $N \otimes_\Z \Q$, respectively. Let $\varX \subseteq \varT$ be a non-singular subvariety of $\varT$ and denote by $\tropX = \trop(\varX)$ its tropicalization~\cite{MS15, MR18}. A unimodular fan $\Sigma$ in $N_\R$ with support $\tropX$ gives rise to a complex toric variety $\CP_\Sigma$ and a tropical toric variety $\TP_\Sigma$. Taking the closures of $\varX$ and $X$ in $\CP_\Sigma$ and $\TP_\Sigma$, respectively, gives compactifications $\varXbar$ and $\tropXbar$. We note that the compactifications depend on the choice of the fan $\Sigma$ whose support is $\trop(X)$, however, we have chosen not to indicate it in the notation for $\varXbar$ or $\tropXbar$. Here and elsewhere in the paper, we use bold letters for algebraic varieties and regular letters for tropical varieties.

For a complex variety $\varZ$, we denote by $H^\bullet(\varZ)$ the cohomology ring of $\varZ$ with coefficients in $\Q$. For a tropical variety $Z$, the $k$-th tropical cohomology group of $Z$ can be defined as $H^k(Z) \coloneqq \bigoplus_{p + q = k} H^{p,q}(Z)$, where $H^{p,q}(Z)$ is the $(p,q)$\nobreakdashes-th tropical cohomology group with $\Q$-coefficients introduced in \cite{IKMZ}, see \cref{sec:trophom}. The tropical cohomology groups together form a ring $H^{\bullet}(Z) = \bigoplus_{k} H^k(Z)$, the product structure being induced by the cup product in cohomology \cites{MZ14, GS-sheaf}. We note that the tropical cohomology of $Z$ depends only on $Z$. In particular, if $Z = \trop(\varZ)$, no information about $\varZ$ beyond $\trop(\varZ)$ goes into the recipe for computing $H^{\bullet}(\trop(\varZ))$.

The question addressed in this paper can be informally stated as follows: Under which conditions can the cohomology of $\varX$ be related to the tropical cohomology of $\trop(\varX)$?

Let $\varX$, $\Sigma$ and $\tropX$ be as above, with $\varXbar$ and $\tropXbar$ the corresponding compactifications. We define $\ctind{X,\Sigma}$ to be the ring homomorphism $\ctind{X,\Sigma} \colon H^\bullet (\tropXbar) \to H^\bullet (\varXbar)$ between the cohomologies of $\tropXbar$ and $\varXbar$, defined by composing the isomorphism $H^{k,k}(\tropXbar) \cong A^k(\CP_\Sigma)$, proved in~\cite[Theorem 1.1]{AP-FY}, with the cycle class map $A^k(\CP_\Sigma) \to H^{2k}(\CP_\Sigma)$ and the pullback morphism on cohomology associated to the embedding $\varXbar \hookrightarrow \CP_\Sigma$. The groups $H^{p,q}(\tropXbar)$ are sent to zero by $\ctind{X,\Sigma}$ for $p \neq q$. We refer to \cref{sec:cohomology_morphism} for more details.

In the following, we will use the map $\ct$ not only on $\tropXbar$ and $\varXbar$, but also on some of their subvarieties: the toric varieties $\CP_\Sigma$ and $\TP_\Sigma$ are endowed with natural stratifications induced by the cone structure of $\Sigma$. Each cone $\sigma \in \Sigma$ gives rise to the torus orbits $\varT^\sigma$ and $N^\sigma_\R$ in $\CP_\Sigma$ and $\TP_\Sigma$, respectively, with corresponding lattice $N^\sigma$. The closures in $\CP_\Sigma$ and $\TP_\Sigma$ of these orbits are denoted by $\CP_\Sigma^\sigma$ and $\TP_\Sigma^\sigma$, respectively, and are isomorphic to the complex and tropical toric varieties associated to the star fan $\Sigma^\sigma$ of $\sigma$ in $\Sigma$. Intersection with these strata induce a stratification of $\varXbar$ and $\tropXbar$. We denote by $\varX^\sigma = \varXbar \cap \varT^\sigma$ and $\tropX^\sigma=\tropXbar \cap N_\R^\sigma$ the stratum associated to $\sigma \in \Sigma$, and by $\varXbar^\sigma$ and $\tropXbar^\sigma$ their closures in $\varXbar$ and $\tropXbar$, respectively. The stratum $\varX^\sigma$ is a closed subvariety of the torus $\varT^\sigma$ and its tropicalization coincides with $\tropX^\sigma$. Moreover, the star fan $\Sigma^\sigma$ is a unimodular fan with support $\tropX^\sigma$. We thus obtain a morphism $H^\bullet (\tropXbar^\sigma) \to H^\bullet (\varXbar^\sigma)$ that we also denote by $\tau^*$.

\begin{definition}\label{def:cohom_tropical}
    Let $\varX \subseteq \varT$ be a subvariety, $\Sigma$ a unimodular fan with support $\tropX = \trop(\varX)$, and $\varXbar$ and $\tropXbar$ the corresponding compactifications. We say that $\varX$ is \emph{cohomologically tropical with respect to $\Sigma$} if the induced maps $\ctind{\tropX^\sigma,\Sigma^\sigma} \colon H^\bullet (\tropXbar^\sigma) \to H^\bullet (\varXbar^\sigma)$ are isomorphisms for all $\sigma\in \Sigma$.
\end{definition}

We show that the property of being cohomologically tropical for \emph{schön} subvarieties of tori does not depend on the chosen unimodular fan. Recall from \cite{Tev07, Hac08} that a subvariety $\varX \subseteq \varT$ is \emph{schön} if for some, equivalently for any, unimodular fan $\Sigma$ of support $\trop(\varX)$, the open strata $\varX^\sigma$, $\sigma\in\Sigma$, of the corresponding compactification are all non-singular. It also implies that the compactification $\varXbar$ is non-singular, and that $\varXbar\setminus\varX$ is a simple normal crossing divisor.

\newtheorem*{theorem:ct_fan_indep}{\cref{thm:ct_fan_independent}}
\begin{theorem:ct_fan_indep}
    Suppose that the subvariety $\varX \subseteq \varT$ is schön and let $\tropX = \trop(\varX)$ be its tropicalization. The following are equivalent.
    \begin{enumerate}
        \item There exists a unimodular fan $\Sigma$ with support $\tropX$ such that $\varX$ is cohomologically tropical with respect to $\Sigma$.
        \item For any unimodular fan $\Sigma$ with support $\tropX$, $\varX$ is cohomologically tropical with respect to $\Sigma$.
    \end{enumerate}
\end{theorem:ct_fan_indep}

Such a schön subvariety $\varX \subseteq \varT$ will be called \emph{cohomologically tropical}. For example, the linear subspaces in $\CP^n$, restricted to the torus, form a family of cohomologically tropical subvarieties. These very affine varieties are complements of hyperplane arrangements, see \cref{subsec:hyperplanecomps}. A generalization is given in~\cite{Sch21} in which Schock defines \emph{quasilinear subvarieties} of tori as those having a tropicalization which is quasilinear, in other words, $\mT$-stable in the language of \cite{AP-hodge-fan}. He shows that these subvarieties are necessarily schön. It follows from his results that quasilinear subvarieties of tori are cohomologically tropical.

\smallskip
We now introduce a class of subvarieties $\varX \subseteq \varT$ with cohomology amenable to a tropical description using the notion of mixed Hodge structures, see \cref{sec:Hodge}.

\begin{definition}\label{def:wunderschon}
    A non-singular subvariety $\varX \subseteq \varT$ of the torus is called \emph{wunderschön with respect to a unimodular fan $\Sigma$} with support $\trop(\varX)$ if all the open strata $\varX^\sigma$ of the corresponding compactification $\varXbar$ are non-singular and connected, and the mixed Hodge structure on $H^k(\varX^\sigma)$ is pure of weight $2k$ for each $k$.
\end{definition}

In particular, a point in the torus is wunderschön.
It follows from the preceding discussion that if $\varX$ is wunderschön, it is schön. Therefore, if $\varXbar$ is the compactification with respect to a unimodular fan $\Sigma$, the boundary $\varXbar \setminus \varX$ is a strict normal crossing divisor.

\smallskip
We prove that the property of being wunderschön is independent of the fan, and that the cohomology of a wunderschön variety is divisorial in the sense of \cref{sec:divisorial}.

\newtheorem*{theorem:wunderschon_fan_independent}{\cref{thm:wunderschon_fan_independent}}
\begin{theorem:wunderschon_fan_independent}
    Suppose that the subvariety $\varX \subseteq \varT$ is wunderschön with respect to some unimodular fan. Then $\varX$ is wunderschön with respect to any unimodular fan with support $\tropX =\trop(\varX)$.
\end{theorem:wunderschon_fan_independent}

\newtheorem*{theorem:wunderschon_Divisorial}{\cref{thm:wunderschon_Divisorial}}
\begin{theorem:wunderschon_Divisorial}
    Let $\varX \subseteq \varT$ be a wunderschön subvariety. Let $\varXbar$ be the compactification of\/ $\varX$ with respect to a unimodular fan $\Sigma$ with support $\tropX = \trop(\varX)$. Then the cohomology of\/ $\varXbar$ is divisorial and generated by irreducible components of $\varXbar \setminus \varX$.
\end{theorem:wunderschon_Divisorial}

A tropical variety  $\tropX$ is called a \emph{tropical homology manifold} if any open subset in $\tropX$ verifies  tropical Poincaré duality. For a tropical variety which is the support of a tropical fan, this amounts to the property that for some, equivalently for any, rational unimodular fan $\Sigma$ of support $\tropX$, the corresponding open strata $\tropX^\sigma$ verify tropical Poincaré duality for all $\sigma\in\Sigma$. In particular this implies that, for any unimodular fan $\Sigma$ of support $X$, any open subset of the corresponding tropical compactification $\tropXbar$ verifies tropical Poincaré duality.

A tropical fanfold $\tropX$ is called \emph{Kähler} if for some, equivalently for any, quasi-projective unimodular fan $\Sigma$ with support $\tropX$, and for any $\sigma \in \Sigma$, the Chow ring $A^\bullet(\Sigma^\sigma)$ verifies the Kähler package, that is, Poincaré duality, hard Lefschetz theorem and Hodge-Riemann bilinear relations. Here, for a unimodular fan $\Sigma$, the Chow ring $A^\bullet(\Sigma)$ coincides with the Chow ring of the corresponding toric variety $\CP_\Sigma$.

\smallskip
We have the following main theorem on characterization of cohomologically tropical subvarieties of tori.

\newtheorem*{theorem:main}{\cref{thm:main}}
\begin{theorem:main}
   Let\/ $\varX \subseteq \varT$ be a schön subvariety with support $\tropX = \trop(\varX)$. Then the following statements are equivalent.
    \begin{enumerate}
        \item $\varX$ is wunderschön and $\tropX$ is a tropical homology manifold,
        \item $\varX$ is cohomologically tropical.
    \end{enumerate}
    Moreover, if any of these statements holds, then $\tropX$ is Kähler.
\end{theorem:main}

We deduce the following result from the above theorem.

\newtheorem*{theorem:openCT}{\cref{thm:openCT}}
\begin{theorem:openCT}[Isomorphism of cohomology on open strata]
    Suppose that $\varX \subseteq \varT$ is schön and cohomologically tropical. Let $\Sigma$ be any unimodular fan with support $\tropX =\trop(\varX)$. Then we obtain isomorphisms
    \[ \ct\colon H^k (\tropX^{\sigma}) \simto H^k(\varX^{\sigma}) \]
    for all $\sigma \in \Sigma$ and all $k$.
\end{theorem:openCT}

\smallskip

Going beyond cohomologically tropical subvarieties of tori, and following the work of \cite{IKMZ}, one can ask the following question. Which families $\famX_t$ of complex projective varieties over the complex disk degenerating at $t=0$ have the property that the tropical cohomology of their tropical limit captures the Hodge numbers of a generic fiber in the family?

In \cref{thm:globalization}, we weaken the condition given in \cite{IKMZ} by showing that it suffices to ask the open components of the central fiber to be cohomologically tropical and schön. By \cref{thm:main}, this is equivalent to asking the maximal dimensional strata to be wunderschön and their tropicalizations to be tropical homology manifolds.

More precisely, let $\pi \colon \famX \to D^*$ be an algebraic family of non-singular algebraic subvarieties in $\CP^n$ parameterized by a punctured disk $D^*$ and with fiber $\famX_t$ over $t \in D^*$. Let $Z\subseteq \TP^n$ be the extended tropicalization of the family (see Section~\ref{sec:globalization}).

By Mumford's proof of the semistable reduction theorem~\cite{KKMS}, we find a triangulation of $Z$ (possibly after a base change) such that the extended family $\pi \colon \overline \famX \to D$ is regular and the fiber over zero $\famX_0$ is reduced and a simple normal crossing divisor. Note that since the extended family is obtained by taking the closure in a toric degeneration of $\CP^n$, each open stratum in $\famX_0$ will be naturally embedded in an algebraic torus.

\newtheorem*{theorem:globalization}{\cref{thm:globalization}}
\begin{theorem:globalization}
    Let $\pi \colon \famX \to D^*$ be an algebraic family of subvarieties in $\CP^n$ parameterized by the punctured disk and let $\pi \colon \overline \famX \to D$ be a semistable extension. If the tropicalization $Z\subseteq \TP^n$ is a tropical homology manifold and all the open strata in $\famX_0$ are wunderschön, then $H^{p,q}(Z)$ is isomorphic to the associated graded piece $W_{2p}/W_{2p-1}$ of the weight filtration in the limiting mixed Hodge structure $H_{\lim}^{p+q}$. The odd weight graded pieces in $H_{\lim}^{p+q}$ are all vanishing.

    Moreover, for $t \in D^*$, we have $\dim H^{p,q}(\famX_t) = \dim H^{p,q}(Z)$, for all non-negative integers $p$ and $q$.
\end{theorem:globalization}

Degenerations appearing in the previous theorem are necessarily maximal in the sense of~\cite{Del-md}, see Section~\ref{sec:discussion-max-degenerate} for more discussion of this connection.
\newtheorem*{theorem:max-degenerate}{\cref{thm:max-degenerate}}
\begin{theorem:max-degenerate} Notations as above, the family $\famX \to D^*$ is maximally degenerate. 
\end{theorem:max-degenerate}

We refer to the earlier work of Gross-Siebert~\cite{GS10} on integral affine manifolds with singularities and degenerations to arrangements of complete toric varieties, the work of Ruddat~\cite{Rud10} on non-necessarily maximal degenerations of Calabi-Yau varieties, and \cite{HK12, KS12, KS16, Rud20} for other interesting results connecting the topology of tropicalizations to the Hodge theory of nearby fibers.

\subsection*{Acknowledgement}

The research of E.\,A. and K.\,S. is supported by the Trond Mohn Foundation project ``Algebraic and Topological Cycles in Complex and Tropical Geometries''. O.\,A. thanks Math+, the Berlin Mathematics Research Center, for support.
M.\,P. has received funding from the European Research Council (ERC) under the European Union's Horizon 2020 research and innovation programme (grant agreement No. 101001995).

This project was started during a visit to the Norwegian Academy of Science and Letters under the Center for Advanced Study Young Fellows Project ``Real Structures in Discrete, Algebraic, Symplectic, and Tropical Geometries''. We thank the Centre of Advanced Study and Academy for their hospitality and wonderful working conditions. We thank as well the hospitality of the mathematics institute at TU Berlin where part of this research was carried out.

We are additionally grateful to an anonymous referee for providing helpful comments which helped us to improve this paper.

\section{Preliminaries}

\subsection{Subvarieties of the torus and tropicalization} \label{sec:Tropicalization}

We briefly recall the tropicalization of subvarieties of tori. Let $N$ be a lattice of rank $n$, $M$ its dual, $N_\R = N \otimes \R$, and $\varT=\varT_N =\Spec(\C[M]) \cong \torus{n}$. Let $\varX$ be a $d$-dimensional subvariety of the torus $\varT$, so that $\varX= \varV(I)$ for an ideal $I \subseteq \C[M]$. The tropicalization of $\varX$ can be described using initial ideals, see e.g. \cite[Section 3.2]{MS15},
\[ \trop(\varX) = \{ w \in N_\R \ | \ \initial_w(I) \neq \langle 1 \rangle \}. \]
A $d$-dimensional fan $\Sigma$ is \emph{weighted} if it comes equipped with a weight function $\wgt \colon \Sigma_d \to \Z$ where $\Sigma_d$ denotes the $d$-dimensional cones. A \emph{tropical fan} is a weighted fan which is pure dimensional and which satisfies the balancing condition in tropical geometry \cite[Section 3.3]{MS15}. A \emph{fanfold} is a subset of $N_\R$ which is the support of a rational fan, and it is a \emph{tropical fanfold} if it is the support of a tropical fan.

The tropicalization $X \coloneqq \trop(\varX)$ is a tropical fanfold, and any fan structure on $\trop(\varX)$ is equipped with a weight function $\wgt_{\varX}$ induced by $\varX$. If $\Sigma$ is a rational fan in $N_\R$ of support $X$ and $\eta$ is some facet of $\Sigma$, then for a generic point $w$ in the relative interior of $\eta$, the variety $\varV(\initial_w(I))$ is a union of translates of torus orbits. Then, $\wgt_{\varX}(\eta)$ is equal to the number of such torus orbit translates counted with multiplicity. This number is invariant for generic choices of points in the relative interior of $\eta$. The tropicalization endowed with the weight function $\wgt_{\varX}$ satisfies the balancing condition and thus is a tropical fanfold in $N_\R$ \cite[Section 3.4]{MS15}.

\subsection{Tropical compactifications of complex varieties}\label{sec:trop_comp}

We now briefly review the notion of \emph{tropical compactifications} introduced in \cite{Tev07}. Let $\Sigma$ be a fan in $N_\R$, and $\CP_\Sigma$ the associated toric variety. There is a bijection between cones in $\Sigma$ and torus orbits in $\CP_\Sigma$. For each $\sigma \in \Sigma$, we denote by $\varT^\sigma$ the corresponding torus orbit. The closure $\comp \varT^\sigma$ is the disjoint union $\bigsqcup_{\gamma \supseteq \sigma} \varT^\gamma$, for cones $\gamma \in \Sigma$ containing $\sigma$. For $\varX \subseteq \varT$ a subvariety and its closure $\varXbar$ in $\CP_\Sigma$, we have $\varXbar = \bigsqcup_{\sigma \in \Sigma} \varT^\sigma \cap \varXbar$. We denote the stratum $\varT^\sigma \cap \varXbar$ by $\varX^\sigma$, and its closure by $\varXbar^\sigma$. Note that, $\varXbar^\sigma = \bigsqcup_{\gamma \supseteq \sigma}\varX^\gamma$.

For $\Sigma$ a unimodular fan with support equal to $\tropX=\trop(\varX)$, the closure $\varXbar$ of $\varX$ in $\CP_\Sigma$ is compact, giving a \emph{tropical compactification} \cite[Proposition 2.3]{Tev07}. Moreover for such a $\Sigma$, the compactification $\varXbar$ of $\varX$ in $\CP_\Sigma$ is said to be \emph{schön} if the torus action $\varT \times \varXbar \to \CP_\Sigma$ is non-singular and surjective, in which case $\varXbar$ is non-singular, and the boundary $\varD\coloneqq \varXbar \setminus \varX$ is a simple normal crossing divisor \cite[Theorem 1.2]{Tev07}. The compactification $\varXbar$ is schön if and only if $\varX^\sigma$ is non-singular for each $\sigma \in \Sigma$ \cite[Lemma 2.7]{Hac08}. If $\varX$ admits a schön compactification, then any unimodular fan with support equal to $\tropX$ will provide a schön compactification \cite[Theorem 1.5]{LQ11}, and in this case, we will say that $\varX$ is \emph{schön}.

\begin{example}\label{example:schon_hypersurface}
    For $f= \sum_{I\in \Delta(f)} a_I x^I \in \C[x_1^{\pm}, \dots, x_n^{\pm}]$ a Laurent polynomial, it is pointed out in \cite[1088]{Tev07} that the very affine hypersurface $\varX=V(f)$ being schön is equivalent to the condition that $f$ is \emph{non-degenerate (with respect to its Newton Polytope)}, a concept studied in \cite{Var76a, Var76b} and \cite{Kou76}. For each face $\gamma \in \Delta(f)$ of the Newton Polytope of $f$, one defines $f_\gamma= \sum_{I\in \gamma} a_I \mathbf{x}^I$. Then $f$ is \emph{non-degenerate} if, for all $\gamma \in \Delta(f)$, the polynomials
    \[ x_1 \frac{\partial f_\gamma}{\partial x_1}, \dots, x_n \frac{\partial f_\gamma}{\partial x_n} \]
    share no common zero in $(\C^*)^n$. This implies that $\varX=V(f)$ is schön by for instance \cite[Lemma 10.3]{Var76a}.
\end{example}

\subsection{Canonical compactifications of tropical varieties}\label{subsection:canonical_comp_tropical}

Let $\Sigma$ be a rational fan in $N_{\R}$. The dimension of a cone $\sigma$ will be denoted by $\dims\sigma$, and we denote by $\Sigma_k$ the set of cones of $\Sigma$ of dimension $k$. The unique cone of dimension $0$ is denoted $\conezero$. Let $\gamma, \delta$ be two faces of $\Sigma$. We write $\gamma \subfaceeq \delta$ if $\gamma$ is a face of $\delta$. For $\delta \in \Sigma$ a cone, the saturated sublattice parallel to $\delta$ is denoted $N_\delta$, and the quotient lattice $N/{N_\delta}$ is denoted $N^\delta$, with quotient map $\pi^\delta \colon N \to N^\delta$. By a slight abuse of terminology, we also denote by $\pi^\delta$ the map of real vector spaces $N_\R \to N_\R^\delta$. Furthermore, the \emph{star} at $\delta$ is the fan $\Sigma^\delta$ in $N^\delta_\R$ whose cones are given by $\set{\pi^\delta (\sigma) \mid \delta \subfaceeq \sigma }$.

We briefly review the construction of tropical toric varieties, referring to \cite[Chapter 6.2]{MS15} for a detailed construction. Let $\T = \R \cup \{+\infty\}$ denote the tropical semi-field. Denote by $\sigma^\vee$ the semigroup of element of $M_\R$ which are nonnegative on $\sigma$. For each $\sigma \in \Sigma$, one defines $U_\sigma^\trop \coloneqq \Hom_{\text{semigroup}}(\sigma^\vee \cap M, \T)$, which can be identified with the set $\bigsqcup_{\delta \subfaceeq \sigma} N_\R^\delta$. We equip $U_\sigma^\trop$ with the subset topology of the product topology on the infinite product $\T^{\sigma^\vee \cap M}$. For $\sigma$ unimodular, $U_\sigma^\trop$ is isomorphic to $\R^{n-\dims\sigma}\times \T^{\dims\sigma}$. For $\delta \subfaceeq \sigma$, the inclusion identifies $U_\delta^\trop$ as an open subset of $U_\sigma^\trop$. The \emph{tropical toric variety} $\TP_{\Sigma}$ associated to $\Sigma$ is the space given by gluing the $U_\sigma^\trop$ along common faces, with underlying set $\bigsqcup_{\sigma \in\Sigma} N_\R^\sigma$.

Let $\Sigma$ be a fan with support $\tropX$. The \emph{canonical compactification} $\tropXbar$ of $\tropX$ relative to the fan $\Sigma$ is the closure of $\tropX$ as a subset of its tropical toric variety $\TP_\Sigma$. Furthermore, $\tropXbar$ has a cellular structure, which we denote $\overline{\Sigma}$, see Section~\ref{sec:trophom} and \cite[Section 2]{AP-FY} for details. For any cone $\sigma \in \Sigma$, we denote by $\tropX^\sigma$ the fanfold associated to $\Sigma^\sigma$. The canonical compactification ${\tropXbar^\sigma}$ of $\tropX^\sigma$ is canonically isomorphic to the closure of $\tropX^\sigma$ when considered as a subset in $N_\R^\sigma \subseteq \TP_\Sigma$, and we will denote this compactified fanfold by $\tropXbar^\sigma$, when $\Sigma$ is understood from the context. Moreover, there is an inclusion of canonical compactifications $i \colon \tropXbar^\sigma \hookrightarrow \tropXbar^\delta$ for $\delta \subfaceeq \sigma$.

When $\tropX = \trop(\varX)$ the tropical canonical compactification $\tropXbar$ relative to any fan $\Sigma$ with support $\tropX$ is the same as the extended tropicalization of the closure $\varXbar \subseteq \CP_\Sigma$ in the sense of  \cite[Section 3]{Kaj08, Pay09}.

\subsection{Mixed Hodge structures}\label{sec:Hodge}

Keeping the notation from \cref{sec:trop_comp}, let $\varX \subseteq \varT$ be a non-singular subvariety, and $\Sigma$ a unimodular fan supported on the tropicalization $\tropX = \Trop(\varX)$, so that we obtain a tropical compactification $\varXbar$ of $\varX$. Moreover suppose that the boundary $\varD \coloneqq \varXbar \setminus \varX$ is a simple normal crossing divisor. We have that $\varD = \bigcup_{\zeta \in \Sigma_1} \varXbar^\zeta$.

By \cite[Section 3]{Del-hodge2}, the \emph{logarithmic de Rham complex} $\Omega^\bullet_\varXbar(\log \varD)$ induces an isomorphism
\[ H^k(\varX;\Q)\cong \Hbb^k (\varXbar;\Omega^\bullet_\varXbar(\log \varD)), \]
for each $k$. Moreover, there is a \emph{weight filtration} $W_\bullet$ on the logarithmic de Rham complex, which gives a mixed Hodge structure on $H^k(\varX)$. This is given by the \emph{Deligne weight spectral sequence}
\[ E_{1}^{-p,q}=H^{q-2p}(\bigsqcup_{\sigma\in \Sigma_p} \varXbar^\sigma) \implies H^{q-p}(\varX), \]
which degenerates on the $E_2$-page. Below, we display the rows $E_1^{\bullet,2k+1}$ and $E_1^{\bullet, 2k}$, where the rightmost elements are in position $(0,2k+1)$ and $(0,2k)$, respectively.
\[ \begin{tikzcd}[column sep = tiny, row sep= tiny]
	{\bigoplus_{\sigma \in \Sigma_{k}} H^{1}(\varXbar^\sigma)} & {\bigoplus_{\delta \in \Sigma_{k-1}} H^{3}(\varXbar^\delta)} & \cdots & {\bigoplus_{\zeta \in \Sigma_{1}} H^{2k-1}(\varXbar^\zeta)} & { H^{2k+1}(\varXbar)} \\
	{\bigoplus_{\sigma \in \Sigma_{k}} H^{0}(\varXbar^\sigma)} & {\bigoplus_{\delta \in \Sigma_{k-1}} H^{2}(\varXbar^\delta)} & \cdots & {\bigoplus_{\zeta \in \Sigma_{1}} H^{2k-2}(\varXbar^\zeta)} & { H^{2k}(\varXbar)}
	\arrow[from=2-3, to=2-4]
	\arrow[from=2-4, to=2-5]
	\arrow[from=2-1, to=2-2]
	\arrow[from=2-2, to=2-3]
	\arrow[from=1-1, to=1-2]
	\arrow[from=1-2, to=1-3]
	\arrow[from=1-3, to=1-4]
	\arrow[from=1-4, to=1-5].
\end{tikzcd} \]

All the differentials are sums of \emph{Gysin homomorphisms} with appropriate signs. Recall that, given a unimodular fan $\Sigma$, and a pair of faces $\sigma,\delta\in \Sigma$ such that $\delta$ is a codimension one face of $\sigma$, the inclusion map $i\colon \varXbar^\sigma \to \varXbar^\delta$ induces a restriction map in cohomology $i^* \colon H^\bullet (\varXbar^\delta)\to H^\bullet (\varXbar^\sigma)$, with dual map $i_* \colon H^\bullet (\varXbar^\sigma)^*\to H^\bullet (\varXbar^\delta)^*$. Applying the Poincaré duality for both $\varXbar^\sigma$ and $\varXbar^\delta$ gives a map $\text{PD}_{\varXbar^\delta}^{-1} \circ i_* \circ \text{PD}_{\varXbar^\sigma} \colon H^{\bullet}(\varXbar^\sigma) \to H^{\bullet+2}(\varXbar^\delta)$, called the \emph{Gysin homomorphism} and denoted $\Gys_{\sigma \supface \delta}$.

Since the Deligne spectral sequence degenerates at the $E_2$ page, the cohomology of the rows $E_1^{\bullet,2k+1}$ and $E_1^{\bullet, 2k}$ yields the following associated graded elements
\begin{equation}\label{diag:E2_page_classical}
\begin{tikzcd}[column sep = tiny, row sep=tiny]
    {\Gr_{2k+1}^W (H^{k+1})} & {\Gr_{2k+1}^W(H^{k+2})} & \cdots & {\Gr_{2k+1}^W(H^{2k})} & {\Gr_{2k+1}^W(H^{2k+1})} \\
    {\Gr_{2k}^W (H^{k})} & {\Gr_{2k}^W(H^{k+1})} & \cdots & {\Gr_{2k}^W(H^{2k-1})} & {\Gr_{2k}^W(H^{2k}),}
\end{tikzcd}
\end{equation}
where $H^k \coloneqq H^k(\varX)$ and ${\Gr_{l}^W(H^{k})}$ denotes the weight $l$ part of the mixed Hodge structure on $H^k$.

Recall that a mixed Hodge structure $H$ is \emph{pure of weight $n$} if $\Gr_i^W (H)=0$ for $i\neq n$. A mixed Hodge structure $H$ is \emph{Hodge-Tate} if $\Gr_k^W(H)$ is of type $(l,l)$ if $k=2l$ and $0$ for $k$ odd, see, e.g., \cite[689]{Del-md}.

\subsection{Wunderschön varieties}

We now consider wunderschön varieties $\varX \subseteq \varT$ as introduced in \cref{def:wunderschon}. As we noted previously, wunderschön varieties are schön. In addition, we have the following.

\begin{proposition}\label{prop:wgt1wunderschoen}
    If a non-singular subvariety $\varX \subseteq \varT$ is wunderschön with respect to $\Sigma$, then the weight function of the tropicalization $\wgt_{\varX}$ is equal to one on all top dimensional faces $\eta$ of\/ $\Sigma$.
\end{proposition}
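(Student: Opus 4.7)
The strategy is to translate the statement into a question about the $0$-dimensional strata of $\varXbar$ and then exploit the wunderschön connectedness hypothesis. Let $\eta \in \Sigma$ be a top-dimensional cone, so that $\dim \eta = d = \dim \varX$. Then the torus orbit $\varT^\eta \subseteq \CP_\Sigma$ has dimension $n-d$, and the stratum $\varX^\eta = \varXbar \cap \varT^\eta$ has dimension $d - \dim \eta = 0$.

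First I would recall the identification $\wgt_\varX(\eta) = |\varX^\eta|$ that holds for any schön compactification. By definition, for a generic $w$ in the relative interior of $\eta$, the initial degeneration $\varV(\initial_w(I)) \subseteq \varT$ is a finite disjoint union of translates of the stabilizer subtorus $\varT_\eta \subseteq \varT$, and these translates are in natural bijection with the points of $\varX^\eta$; see \cite{Tev07, Hac08}. The schön hypothesis ensures $\varX^\eta$ is reduced, so each translate is counted with multiplicity $1$, yielding $\wgt_\varX(\eta) = |\varX^\eta|$.

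Second, since $\varX$ is wunderschön, the stratum $\varX^\eta$ is non-singular and connected by \cref{def:wunderschon}. A non-singular connected $0$-dimensional complex variety is a single point, so $|\varX^\eta| = 1$. Combining the two steps gives $\wgt_\varX(\eta) = 1$, as desired.

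The only delicate point, which I would spell out carefully rather than merely quote, is the equality $\wgt_\varX(\eta) = |\varX^\eta|$: it rests on the fact that for a schön compactification each irreducible component of the initial degeneration is a reduced $\varT_\eta$-coset and corresponds to exactly one point of $\varX^\eta$. Everything else reduces to dimension counting and the elementary observation that a connected finite reduced scheme over $\C$ consists of a single point.
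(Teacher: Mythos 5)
Your proof is correct and follows essentially the same route as the paper: both identify $\wgt_\varX(\eta)$ with the number of points (with multiplicity) in the zero-dimensional stratum $\varX^\eta = \varXbar^\eta$, then invoke the wunderschön hypothesis to conclude that this stratum is a single reduced point. The extra care you take in justifying $\wgt_\varX(\eta) = |\varX^\eta|$ via the initial degeneration and reducedness from schönness is a welcome elaboration but matches the paper's implicit reasoning.
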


\begin{proof}
    The weight $\wgt_{\varX}(\eta) $ is equal to the intersection multiplicity of $\varXbar$ with the toric stratum $\CP_{\Sigma^\eta}$. In other words, it is the number of points in the variety $\varXbar^{\eta}$ counted with multiplicities. Since $\varX$ is wunderschön, the variety $\varXbar^{\eta}$ must consist of a single point. Hence, for all facets $\eta$ we have $\wgt_{\varX}(\eta) = 1$.
\end{proof}

A consequence of the wunderschön property is that, for each $\sigma \in \Sigma$, the even rows of the $E_2 = E_\infty$-page for $\varX^\sigma$, taking a priori the form shown in \eqref{diag:E2_page_classical}, are in fact zero except in the leftmost position, which implies that $H^k(\varX^{\sigma}) = \Gr_{2k}^W(H^{k}(\varX^\sigma))$. Moreover, the odd rows of the  $E_1$-page are all identically zero by the following lemma.

\begin{lemma}
    Let $\varX \subseteq \varT$ be a wunderschön variety with respect to $\Sigma$. Then $H^{2k-1}(\varXbar^\sigma)=0$ for $k=1,\dots,\dim (\varXbar^\sigma) $ and all $\sigma \in \Sigma$.
\end{lemma}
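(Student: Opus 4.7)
The plan is to proceed by induction on the dimension of the closed stratum $\varXbar^\sigma$, or equivalently by descending induction on the cone $\sigma$ in the poset of cones of $\Sigma$. If $\sigma$ is a facet, then $\varXbar^\sigma = \varX^\sigma$ has dimension zero by \cref{prop:wgt1wunderschoen}, and is a single point since it is connected by the wunderschön hypothesis; hence $H^{2k-1}(\varXbar^\sigma) = 0$ trivially.

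For the inductive step, suppose that $\dim \varXbar^\sigma > 0$ and that the statement holds for every closed stratum $\varXbar^{\sigma'}$ with $\sigma' \supsetneq \sigma$, each of which has strictly smaller dimension. One first observes that $\varX^\sigma$, viewed as a subvariety of the torus $\varT^\sigma$, is itself wunderschön with respect to the star fan $\Sigma^\sigma$: the strata of the corresponding compactification are precisely the $\varX^{\sigma'}$ for $\sigma' \supseteq \sigma$, all of which are non-singular, connected, and have the required purity by the wunderschön assumption on $\varX$. In particular, the Deligne weight spectral sequence \eqref{diag:E2_page_classical} is available for $\varX^\sigma \hookrightarrow \varXbar^\sigma$ and degenerates at its $E_2$-page.

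Next, I analyse the odd row $E_1^{\bullet, 2k+1}$ of this spectral sequence. For $p \geq 1$, the entry
\[
E_1^{-p,2k+1} \;=\; \bigoplus_{\sigma' \in \Sigma^\sigma_p} H^{2(k-p)+1}(\varXbar^{\sigma'})
\]
is a direct sum of odd-degree cohomology groups of the closed strata $\varXbar^{\sigma'}$ with $\sigma' \supsetneq \sigma$, and therefore vanishes by the inductive hypothesis. Consequently, the only position in this row that can carry a non-zero class is the rightmost one, $E_1^{0,2k+1} = H^{2k+1}(\varXbar^\sigma)$, and this entry survives unchanged to the $E_\infty$-page. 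On the other hand, the wunderschön hypothesis applied to $\varX^\sigma$ gives that $H^{2k+1}(\varX^\sigma)$ is pure of weight $2(2k+1)$, so $\Gr_{2k+1}^W H^{2k+1}(\varX^\sigma) = 0$. Since the spectral sequence abuts to this graded piece at the $(0,2k+1)$-spot and degenerates at $E_2$, one concludes $H^{2k+1}(\varXbar^\sigma) = E_\infty^{0,2k+1} = 0$; reindexing $k \mapsto k-1$ yields the lemma.

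The main point requiring care is verifying that the wunderschön property descends to each open stratum $\varX^\sigma$ with respect to $\Sigma^\sigma$, since this is what makes the Deligne spectral sequence and its degeneration available at every step of the induction; otherwise the argument is a direct combinatorial exploitation of the sparsity of the $E_1$-page imposed by the inductive hypothesis and top-weight purity.
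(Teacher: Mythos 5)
Your proof is correct and follows essentially the same route as the paper's: descending induction on the cone (equivalently, induction on the dimension of the stratum), vanishing of all entries in odd rows of the Deligne $E_1$-page except the rightmost one via the inductive hypothesis, degeneration at $E_2$, and then the weight-$2k$ purity condition from the wunderschön hypothesis to kill that last entry. You make explicit the observation that the wunderschön property passes to the strata $\varX^\sigma$ with respect to $\Sigma^\sigma$, which the paper leaves implicit; otherwise the arguments are identical up to a shift of index.
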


\begin{proof}
    The property is true for a wunderschön point. By induction on dimension, we have $H^{2k-1}(\varXbar^\sigma)=0$ for $k=1,\dots,\dim (\varXbar^\sigma)$ and all cones $\sigma$ except the central vertex $\conezero$, so that it remains to prove that $H^{2k-1}(\varXbar)=0$ for $k=1,\dots,\dim(\varXbar)$. For each such $k$, the $(2k-1)$-th row of the $E_2$-page of the Deligne spectral sequence is given by $E_2^{0,2k-1} = H^{2k-1}(\varXbar)=  \Gr_{2k-1}^W(H^{2k-1}(\varX))$ and all other terms are $0$. Since $\varX$ is wunderschön, $\Gr_{2k-1}^W(H^{2k-1}(\varX))=0$, and so $H^{2k-1}(\varXbar)=0$.
\end{proof}

Since the $E_2$-page is the cohomology of the $E_1$-page, this proves the following lemma.

\begin{lemma}\label{lemma:wunderschon_deligne_exactness}
    For $\varX \subseteq \varT$ a wunderschön variety with respect to $\Sigma$, and for each cone $\sigma \in \Sigma$ and each $k$, we have the following exact sequences
    \begin{align*}
        0 \xrightarrow{} H^k(\varX^\sigma) \xrightarrow{\res} \bigoplus_{\mu \supface \sigma \\ \dims\mu=\dims\sigma+k} H^0(\varXbar^\mu) \xrightarrow{\Gys} \bigoplus_{\nu \supface \sigma \\ \dims\nu=\dims\sigma+k-1} H^2(\varXbar^\nu) \xrightarrow{\Gys} \cdots \hspace{2cm} & \\
        \cdots \xrightarrow{\Gys} \bigoplus_{\xi \supface \sigma \\ \dims\xi=\dims\sigma+1} H^{2k-2}(\varXbar^\xi) \xrightarrow{\Gys} H^{2k}(\varXbar^\sigma) \xrightarrow{} 0,&
    \end{align*}
    where $\res$ denotes the logarithmic residue map and $\Gys$ denotes a signed sum of suitable Gysin maps.
\end{lemma}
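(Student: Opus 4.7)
The plan is to read the lemma directly off Deligne's weight spectral sequence applied to the stratum $\varX^\sigma$, as the sentence preceding the statement already suggests. First I would observe that $\varX^\sigma$ is itself wunderschön with respect to the star fan $\Sigma^\sigma$: its own strata are the $\varX^\tau$ for cones $\tau \supface \sigma$, which inherit from $\varX$ the properties of being non-singular, connected, and having cohomology pure of weight $2j$ in each degree $j$. In particular the preceding lemma applies to $\varX^\sigma$ and gives $H^{2j-1}(\varXbar^\tau) = 0$ for every $\tau \supface \sigma$ and every $j \geq 1$.

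Next I would write down the Deligne weight spectral sequence for $\varX^\sigma$, namely
\[ E_1^{-p, q}(\varX^\sigma) \;=\; \bigoplus_{\substack{\tau \supface \sigma \\ \dims\tau = \dims\sigma + p}} H^{q-2p}(\varXbar^\tau) \;\Longrightarrow\; H^{q-p}(\varX^\sigma), \]
which degenerates at the $E_2$-page and satisfies $E_2^{-p,q} = \Gr_q^W H^{q-p}(\varX^\sigma)$. The $d_1$ differentials are the signed sums of Gysin maps recalled in \cref{sec:Hodge}. The row $q = 2k$ is exactly the complex appearing in the lemma, minus the $H^k(\varX^\sigma)$ term on the left and the outer zeros.

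The crux is a short computation of $E_2$ in that row. First, all odd rows $E_1^{\bullet, 2j-1}$ vanish by the vanishing of the groups $H^{2j-1}(\varXbar^\tau)$ noted above. Second, the wunderschön property gives $H^{2k-p}(\varX^\sigma)$ pure of weight $2(2k-p)$, so $E_2^{-p,2k} = \Gr_{2k}^W H^{2k-p}(\varX^\sigma)$ vanishes unless $p = k$, in which case it equals $H^k(\varX^\sigma)$ in full. Translated back to the $d_1$ complex of the $2k$-th row, this yields exactness at every interior position, surjectivity of the rightmost differential onto $H^{2k}(\varXbar^\sigma)$ (since $E_2^{0,2k}=0$), and the identification of the kernel of the leftmost differential with $H^k(\varX^\sigma)$, injected into the complex via the edge map of the spectral sequence.

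The final step is to identify this edge map $H^k(\varX^\sigma) \hookrightarrow \bigoplus_{\mu} H^0(\varXbar^\mu)$ with the map $\res$ in the statement; under the standard description of the Deligne spectral sequence in terms of the logarithmic de Rham complex on $\varXbar^\sigma$, this edge map is precisely the iterated Poincaré residue along the $k$ divisors $\varXbar^\mu$ with $\dims\mu = \dims\sigma + k$. The only obstacle is therefore not conceptual but bookkeeping: one must check that the signs and conventions used to match $d_1$ with the Gysin maps and the edge map with the residue agree with those in the statement. Both identifications, however, are classical ingredients of Deligne's construction already recorded in the setup of the spectral sequence in \cref{sec:Hodge}.
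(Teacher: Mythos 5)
Your proposal is correct and follows essentially the same route as the paper: apply the Deligne weight spectral sequence to the stratum $\varX^\sigma$, use the preceding lemma (vanishing of $H^{2j-1}(\varXbar^\tau)$) to kill the odd rows of $E_1$, use the purity built into the wunderschön hypothesis to show $E_2^{-p,2k}$ is concentrated at $p=k$, and conclude that the $2k$-th row of $E_1$ is an acyclic complex whose only cohomology is $H^k(\varX^\sigma)$ at the left end. The paper is simply terser — it states the concentration of the $E_2$-page in the paragraph preceding the lemma, proves the odd vanishing, and then asserts the conclusion in one sentence ("Since the $E_2$-page is the cohomology of the $E_1$-page, this proves the following lemma"); you made explicit the step that $\varX^\sigma$ is itself wunderschön with respect to $\Sigma^\sigma$, and the identification of the edge map with the iterated Poincaré residue, both of which the paper leaves implicit.
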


\begin{example}[Wunderschön curves are rational] \label{ex:wunder_curves}
We classify wunderschön curves $\varX\subseteq \varT$. A tropical compactification $\varXbar$ consists of adding points to $\varX$. Points have pure mixed Hodge structure on their cohomology. Thus, for $\varX$ to be wunderschön with respect to a fan $\Sigma$, it is necessary that each stratum $X^\zeta$ for $\zeta \in \Sigma_1$ be connected, i.e., consists of a single point. The Deligne weight spectral sequence degenerates on the $E_2$ page, and is shown in \cref{fig:E_1_page_curve} and \cref{fig:E_2_page_curve}. Note that $H^2(\varX)$ is trivial. Moreover, if $\varX$ is wunderschön, then $H^1(\varXbar) = \Gr_1^W(H^1(\varX))$ must be trivial.
\begin{figure}[H]
    \centering
    \begin{minipage}[b]{.5\textwidth}
        \centering
        \begin{tikzpicture}
            \matrix (m) [matrix of math nodes,
              nodes in empty cells,nodes={minimum width=5ex,
              minimum height=5ex,outer sep=-5pt},
              column sep=2ex,row sep=0.3ex]{
                 H^{0}(\bigsqcup_{\zeta\in \Sigma_1} \varXbar^\zeta)  & H^2(\varXbar) & 2  \\
                 0 & H^1(\varXbar) & 1  \\
                 0 &  H^0(\varXbar)  & 0  \\
                -1 &  0  & \strut \quad\strut   \\};
            \draw[->, shorten <=1mm, shorten >=1mm] (m-1-1) -- (m-1-2);
            \draw[thick] ($(m-1-3.west)+(0,0.4)$) -- (m-4-3.west) ;
            \draw[thick] ($(m-4-1.north)+(-1,0)$) -- (m-4-3.north) ;
        \end{tikzpicture}

        \captionof{figure}{$E_1$-page from \cref{ex:wunder_curves}}
        \label{fig:E_1_page_curve}
    \end{minipage}%
    \begin{minipage}[b]{.5\textwidth}
        \centering
        \begin{tikzpicture}
            \matrix (m) [matrix of math nodes,
              nodes in empty cells,nodes={minimum width=5ex,
              minimum height=5ex,outer sep=-5pt},
              column sep=2ex,row sep=0.3ex]{
                \Gr_2^W(H^1(\varX))  & 0 & 2  \\
                 0 & \Gr_1^W(H^1(\varX)) & 1  \\
                 0 &  \Gr_0^W(H^0(\varX))  & 0  \\
                -1 &  0  & \strut \quad\strut   \\};
            \draw[thick] ($(m-1-3.west)+(0,0.4)$) -- (m-4-3.west) ;
            \draw[thick] ($(m-4-1.north)+(-1,0)$) -- (m-4-3.north) ;
        \end{tikzpicture}

        \captionof{figure}{$E_2$-page from \cref{ex:wunder_curves}}
        \label{fig:E_2_page_curve}
    \end{minipage}
\end{figure}

Therefore, a non-singular curve $\varX \subseteq \varT$ is wunderschön if and only if the curve $\varXbar$ is isomorphic to $\C\P^1$ and it meets each toric boundary divisor of $\CP_{\Sigma}$ in only one point. We conclude that the only wunderschön (open) curves are complements of a finite set of points in a non-singular rational curve.
\end{example}

\subsection{Tropical homology and cohomology} \label{sec:trophom}

We now briefly sketch the theory of tropical homology and cohomology, and refer to \cite{JRS18,JSS19,AP-tht,AP-FY,IKMZ,Aks21,GS-sheaf} for details. We work with $\Q$-coefficients.

Let $\Sigma$ be a rational fan in $N_\R$ with support $\tropX$. Let $\tropXbar$ be the closure of $\tropX$ inside the tropical toric variety $\TP_\Sigma$. The closure $\tropXbar$ has a cellular structure $\overline{\Sigma}$ where the cells of $\overline{\Sigma}$ consist of the closures of the cones in $\Sigma^\sigma$ for all $\sigma \in \Sigma$. In particular, each face of $\overline{\Sigma}$ is indexed by a pair of cones $\sigma, \gamma \in \Sigma$ satisfying $\gamma \supfaceeq \sigma$, and denoted $C_\gamma^\sigma \in \overline \Sigma$.  For each face $C_\gamma^\sigma \in \overline{\Sigma}$, the \emph{$p$-th multi-tangent space $\F_p(C_\gamma^\sigma)$} (with $\Q$-coefficients) is defined as
\begin{equation*}
    \F_p (C_\gamma^\sigma) \coloneqq \sum_{\eta \supfaceeq \gamma} \bigwedge^p (N_{\eta}/ N_{\sigma}) \otimes \Q  \subseteq \bigwedge^p N_{\Q}^\sigma.
\end{equation*}
Moreover, for $\alpha \subfaceeq \beta$ two faces of $\overline{\Sigma}$, there is a map $\iota_{\beta \supfaceeq \alpha} \colon \F_p(\beta) \to \F_p(\alpha)$, which is an inclusion if both faces lie in the same subfan $\Sigma^\sigma$ for some $\sigma$, or if $\alpha = C_\eta^\sigma$ and $\beta = C_\eta^{\sigma'}$ with $\eta \supfaceeq \sigma \supface \sigma'$, then $\iota_{\beta \supfaceeq \alpha}$ is induced by the projection $N^{\sigma'} \to N^\sigma$. Generally, the map $\iota_{\beta \supfaceeq \alpha}$ is defined as compositions of such inclusions and projections. Furthermore, by dualizing, we obtain the \emph{$p$-th multi-cotangent spaces $\F^p(\alpha)$} and reversed morphisms.

By selecting orientations for each of the cones $\alpha \in \overline{\Sigma}$, we obtain relative compatibility signs $\mathrm{sign}(\alpha,\beta) \in \set{\pm 1}$ for $\alpha \subface \beta$ with $\dims\beta = \dims\alpha+1$. We may thus use the multi-tangent spaces to define a chain complex
\begin{equation*}
    C_{p,q}(\overline{\Sigma}) \coloneqq \bigoplus_{\alpha \in \overline{\Sigma}_q} \F_p(\alpha),
\end{equation*}
that is, summing over faces $\alpha$ of dimension $q$ in $\overline \Sigma$, with differentials $\partial_q \coloneqq C_{p,q}(\overline{\Sigma}) \to C_{p,q-1}(\overline{\Sigma})$ defined component-wise as the maps $\mathrm{sign}(\alpha,\beta)\iota_{\beta \supface \alpha}$ when $\alpha \subface \beta$ and $\dims\beta = \dims\alpha+1$, and defined to be $0$, otherwise. Similarly, by dualizing everything, we obtain a cochain complex $C^{p,q}(\overline{\Sigma})$ for the multi-cotangent spaces.

The homology groups $H_{p,q}(\overline{\Sigma}) \coloneqq H_q(C_{p,\bullet}(\overline{\Sigma}))$ of the complex $C_{p,\bullet}(\overline{\Sigma})$ are invariants of the canonically compactified support $\tropXbar$ of the support $\tropX$ of the fan $\Sigma$. Therefore, we define the \emph{tropical homology of $\tropXbar$} as the homology $H_{p,q}(\tropXbar) \coloneqq H_q(C_{p,\bullet}(\overline{\Sigma}))$ of the complex $C_{p,\bullet}(\overline{\Sigma})$. The \emph{tropical cohomology of $\tropXbar$} is $H^{p,q}(\tropXbar)\coloneqq H^q(C^{p,\bullet}(\overline{\Sigma}))$.

In fact, tropical homology and cohomology can be defined for any rational polyhedral space. Moreover, there are various equivalent descriptions of tropical (co)homology in terms of cellular, singular, and sheaf theoretic terms \cite{IKMZ,MZ14,GS-sheaf}. For any rational polyhedral space $Z$, we set
\[ H^k(Z) \coloneqq \bigoplus_{p+q=k} H^{p,q}(Z). \]
For example, for a fanfold $X$, the tropical homology is $H_{p,q}(X)= \F_p(\conezero)$ if $q=0$ and $0$ otherwise, and the tropical cohomology of $\tropX$ is $H^{p,q}(X)= \F^p(\conezero)$ if $q=0$ and $0$ otherwise~\cite[Proposition 3.11]{JSS19}.

\smallskip
If $\tropX$ is a tropical fanfold, the balancing condition implies the existence of a \emph{fundamental class} $[\tropXbar]\in H_{d,d}(\tropXbar)$, which induces a \emph{cap product} $\frown [\tropXbar] \colon H^{p,q}(\tropXbar)\to H_{d-p,d-q}(\tropXbar)$ for each $p,q\in \set{0,\dots,d}$. When these maps are isomorphisms for all $p$ and $q$, the variety $\tropXbar$ is said to satisfy \emph{tropical Poincaré duality}.
\begin{definition}\label{def:trop_hom_mfld}
    A tropical fanfold $\tropX$ is called a \emph{tropical homology manifold} if one of the three following equivalent conditions hold:
    \begin{itemize}
        \item \textbf{There exists} a unimodular fan $\Sigma$ with support equal to $\tropX$ such that each of the canonical compactifications $\tropXbar^\sigma$ satisfies  tropical Poincaré duality, for all cones $\sigma \in \Sigma$.
        \item \textbf{For any} unimodular fan $\Sigma$ with support equal to $\tropX$, each of the canonical compactifications $\tropXbar^\sigma$ satisfies tropical Poincaré duality, for all cones $\sigma \in \Sigma$.
        \item Any open subset $U$ of $X$ satisfies tropical Poincaré duality, \ie, the tropical Poincaré duality induces an isomorphism between the tropical cohomology and the tropical Borel-Moore homology of $U$ (see \cite{JSS19,JRS18} for details). 
    \end{itemize}
A tropical variety $Z$ is called a \emph{tropical homology manifold} if any open subset $U$ of $Z$ verifies tropical Poincaré duality. \qedhere
\end{definition}

This definition corresponds to the notion of \emph{homological smoothness} in \cite{AP-homology} and to \emph{local tropical Poincaré duality spaces} in \cite{Aks21}. The equivalence of the three statements for tropical fanfolds is non-trivial and follows from Theorems 1.2 and 1.3 in \cite{AP-homology}, and Theorem 1.8 of the article \cite{AP-FY}.

\subsection{Chow rings of fans} \label{subsec:chow_ring_fan}

We now recall some facts about the Chow ring of a fan, see for instance \cites{AP-hodge-fan} for more details.

Let $\Sigma$ be a unimodular fan in a vector space $N_\R$. The Chow ring $A^\bullet(\Sigma)$ is the quotient ring
\[ A^\bullet(\Sigma) \coloneqq \rquot{\Q[\xvar_\zeta \mid \zeta \in \Sigma_1]}{(I+J)} \]
with a variable $\xvar_\zeta$ for each ray $\zeta \in \Sigma_1$. Here $I$ is the ideal generated by all monomials $\xvar_{\zeta_1}\!\cdots \xvar_{\zeta_l}$ such that the rays $\zeta_1,\dots, \zeta_l$ do not form a cone of $\Sigma$; and $J$ is the ideal generated by the expressions $\sum_{\zeta\in \Sigma_1} \langle m, \e_\zeta \rangle \xvar_\zeta$, where $\e_\zeta\in N$ is the primitive vector of the ray $\zeta$ and $m$ ranges over elements of the dual lattice $M$.

For $\sigma \in \Sigma$, we define $\xvar_\sigma \coloneqq \xvar_{\zeta_1}\!\cdots\xvar_{\zeta_k}$, where $\zeta_1, \dots, \zeta_k$ are the rays of $\sigma$. As a vector space, $A^\bullet(\Sigma)$ is generated by $\xvar_\sigma$, $\sigma\in\Sigma$. For a pair of cones $\delta \subfaceeq \sigma$, there is a Gysin map $\Gys_{\sigma \supfaceeq \delta}\colon A^\bullet(\Sigma^\sigma)\to A^{\bullet+\dims\sigma-\dims\delta} (\Sigma^\delta)$. This map is defined by mapping $\xvar_{\eta'} \in \Sigma^\sigma$ to $\xvar_{\eta'}\xvar_{\zeta_1}\!\cdots \xvar_{\zeta_r}$, where $\eta'$ is a face of $\Sigma^\sigma$, $\eta$ is the corresponding face in $\Sigma^\delta$, and $\zeta_1,\dots, \zeta_r$ are the rays of $\sigma$ not in $\delta$.

\medskip

Since $\Sigma$ is unimodular, there is an isomorphism of rings $\Phi_\Sigma \colon A^\bullet(\Sigma) \simto A^\bullet(\CP_\Sigma)$ from the Chow ring of $\Sigma$ to the Chow ring of the toric variety $\CP_\Sigma$, see e.g., \cite[Section 3.1]{Bri96}. Furthermore, the cycle class map $\mathrm{cyc}_\Sigma \colon A^\bullet(\CP_\Sigma) \to H^{2\bullet}(\CP_\Sigma)$ gives a graded ring homomorphism to cohomology, see \cite[Corollary 19.2]{Ful84}. Consider a subvariety $\varX$ of the torus, and assume that the support of $\Sigma$ is $\Trop(\varX)$. Let $\varXbar$ be the corresponding compactification. There is the restriction map of rings $r^* \colon H^{\bullet} (\CP_\Sigma) \to H^{\bullet} (\varXbar)$. Composing all these homomorphisms gives a morphism of rings $\Phi \colon A^\bullet(\Sigma) \to H^{2\bullet}(\varXbar)$ which maps $\xvar_\sigma$ to the class of $\varXbar^\sigma$.

\medskip

In the tropical world, there is a similar map. Let $\tropX$ be the support of $\Sigma$ and let $\tropXbar$ be the corresponding compactification. One can consider the composition
\[ A^\bullet(\Sigma) \to H^{2\bullet}(\T\P_\Sigma) \to H^{2\bullet}(\tropXbar) \]
mapping $\xvar_\sigma$ to the class of $\tropXbar^\sigma$. By~\cite[Theorem 1.1]{AP-FY}, this composition induces an isomorphism of rings $\bigoplus_k A^k(\Sigma) \simto \bigoplus_k H^{k,k}(\tropXbar)$. We define the inverse map $\Psi\colon H^\bullet(\tropXbar) \to A^{\bullet/2}(\Sigma)$ by mapping $(p,q)$-classes to zero if $p \neq q$. Here, by convention, $A^{k/2}(\Sigma)$ is trivial for odd $k$. If $\Sigma$ is a tropical homology manifold, $\Psi$ is an isomorphism by \cite[Theorem~1.3]{AP-FY}, that is, $H^{p,q}(\tropXbar)$ is trivial for $p\neq q$.

\subsubsection*{Kähler package}

We recall the Kähler package for Chow rings of fans, see~\cite{AP-hodge-fan}.
Assume $\Sigma$ is tropical and quasi-projective, \ie, there exists a conewise linear function $f$ on $\Sigma$ which is strictly convex in the following sense. For any $\sigma\in\Sigma$, there exists a linear map $m \in M$ such that $f-m$ is zero on $\sigma$ and strictly positive on $U\setminus\sigma$ for some open neighborhood $U$ of the relative interior of $\sigma$. To such an $f$, one can associate the element $L \coloneqq \sum_{\zeta \in \Sigma_1} f(\e_\zeta) \xvar_\zeta \in A^1(\Sigma)$. These elements coming from strictly convex functions are called \emph{ample classes}. Since $\Sigma$ is tropical, the degree map $\deg\colon A^d(\Sigma) \to \Q$ mapping $\xvar_\eta$ to $\wgt(\eta)$ for any facet $\eta$ of $\Sigma$ is a well-defined morphism.

The Chow ring $A^\bullet(\Sigma)$ is said to verify the Kähler package if the following holds:
\begin{itemize}
    \item(Poincaré duality) the pairing \[ \begin{array}{ccl}
        A^k(\Sigma) \times A^{d-k}(\Sigma) &\to& \Q, \\
        a,b\phantom{-k\!} &\mapsto& \deg(ab),
    \end{array} \]
    is perfect for any $k$;
    \item(Hard Lefschetz theorem) for any ample class $L$, the multiplication by $L^{d-2k}$ induces an isomorphism between $A^k(\Sigma)$ and $A^{d-k}(\Sigma)$ for all $k \leq d/2$;
    \item(Hodge-Riemann bilinear relations) for any $k\leq d/2$ and any ample class $L$, the bilinear map
    \[ \begin{array}{ccl}
        A^k(\Sigma) \times A^k(\Sigma) &\to& \Q, \\
        a, b &\mapsto& (-1)^k\deg(L^{d-2k}ab),
    \end{array} \]
    is positive definite on $\ker(\ \cdot L^{d-2k+1}\colon A^k(\Sigma) \to A^{d-k+1}(\Sigma))$.
\end{itemize}

A tropical fanfold $\tropX$ is called \emph{Kähler} if it is a tropical homology manifold and there exists a quasi-projective unimodular fan of support $\tropX$ such that $A^\bullet(\Sigma^\sigma)$ verifies the Kähler package for any $\sigma\in\Sigma$. In such a case, any quasi-projective unimodular fan $\Sigma$ on $\tropX$ verifies the previous property (cf. \cite{AP-hodge-fan, AP-homology}).

\subsection{Tropical Deligne resolution}\label{subsec:TropicalDeligne}

Let $\Sigma$ be a unimodular fan on some tropical homology manifold $X$. Let $\delta \subfaceeq \sigma$ be two faces of $\Sigma$. The inclusion $i^\trop \colon \tropXbar^\sigma \to \tropXbar^\delta$ of canonically compactified fanfolds, both satisfying tropical Poincaré duality, gives a homomorphism $i_{*}^\trop \colon H_k(\tropXbar^\sigma) \to H_k(\tropXbar^\delta)$. Applying the tropical Poincaré duality for both $\tropXbar^\sigma$ and $\tropXbar^\delta$, this gives a map $\text{PD}_{\tropXbar^\delta}^{-1} \circ i_*^{\trop} \circ \text{PD}_{\tropXbar^\sigma} \colon H^{k}(\tropXbar^\sigma) \to H^{k+2(\dims{\sigma} - \dims{\delta})}(\tropXbar^\delta)$, called the \emph{tropical Gysin homomorphism} and denoted $\Gys_{\sigma\supfaceeq \delta}^\trop$.

In \cite[Theorem 1.6]{AP-homology}, it is shown that for a fanfold $\tropX$ which is a tropical homology manifold and a simplicial fan $\Sigma$ with support $\tropX$, there are \emph{tropical Deligne resolutions}, i.e., exact sequences for any $k$,
\begin{align*}
    0 \longrightarrow {H^k(\tropX)} \longrightarrow {\bigoplus_{\sigma \in \Sigma_{k}} H^0(\tropXbar^\sigma)} \longrightarrow {\bigoplus_{\delta \in \Sigma_{k-1}} H^2(\tropXbar^\delta)} \longrightarrow \cdots \hspace{2cm}& \\
    \cdots\longrightarrow {\bigoplus_{\zeta \in \Sigma_{1}} H^{2k-2}(\tropXbar^\zeta)} \longrightarrow {H^{2k}(\tropXbar)} \longrightarrow {0,} &
\end{align*}
where the first non-zero morphism is given by integration (that is, by the evaluation of the element $\alpha \in H^k(\tropX)$ at the canonical multivector of each face $\sigma \in \Sigma_k$), and all subsequent maps are given by the tropical Gysin homomorphisms (with appropriate signs \cite[Section 3]{AP-homology}).

\section{The induced morphism on cohomology by tropicalization} \label{sec:cohomology_morphism}

The aim of this section is to define a map relating tropical cohomology to classical cohomology, as well as to prove \cref{thm:compare_gysin}, which relates Gysin maps in tropical and classical cohomology.

\begin{definition}\label{def:ct_map}
    Let $\varX \subseteq \varT$ be a subvariety and $\Sigma$ a unimodular fan with support $\tropX=\trop(\varX)$, and $\varXbar$ and $\tropXbar$ be the compactifications of\/ $\varX$ and $\tropX$ with respect to~$\Sigma$. We define
    \[ \ctind{X,\Sigma} \colon H^\bullet (\tropXbar) \to H^\bullet (\varXbar) \]
    to be the ring homomorphism defined as the composition of the maps $\Psi\colon H^\bullet (\tropXbar) \to A^{\bullet/2}(\Sigma)$ with $ \Phi \colon A^{\bullet/2} (\tropXbar) \to H^\bullet (\varXbar)$ from \cref{subsec:chow_ring_fan}.
\end{definition}

The map $\ct$ is the morphism comparing the tropical and classical cohomology in order to define \emph{cohomologically tropical} varieties in \cref{def:cohom_tropical}.

We will now relate the classical and tropical Gysin maps through the map $\ct$. This will be useful later for comparing Deligne sequences.

\begin{proposition}\label{thm:compare_gysin}
    Let $\tropX = \Trop(\varX)$ the be tropicalization of a subvariety $\varX \subseteq \varT$, $\Sigma$ a unimodular fan with support $\tropX$, with $\sigma,\delta \in \Sigma$ such that $\delta$ is a face of $\sigma$ of codimension one, giving inclusion maps $\varXbar^\sigma \to \varXbar^\delta$ and $\tropXbar^\sigma \to \tropXbar^\delta$. Then the following diagram commutes:
    \[\begin{tikzcd}
        H^{k}(\tropXbar^\sigma) & H^{k}(\varXbar^\sigma) \\
        H^{k+2}(\tropXbar^\delta) & H^{k+2}(\varXbar^\delta).
        \arrow["{\Gys_{\sigma \supface \delta}^\trop}", from=1-1, to=2-1]
        \arrow["{\ctind{\varX,\Sigma}}"', from=1-1, to=1-2]
        \arrow["{\Gys_{\sigma \supface \delta}}", from=1-2, to=2-2]
        \arrow["{\ctind{\varX',\Sigma'}}", from=2-1, to=2-2]
    \end{tikzcd}\]
\end{proposition}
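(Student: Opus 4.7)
The strategy is to factor the map $\ct$ through the Chow rings $A^\bullet(\Sigma^\sigma)$ and $A^\bullet(\Sigma^\delta)$, as prescribed by \cref{def:ct_map}, and reduce the claim to matching commutations on the Chow ring side. Recall from \cref{subsec:chow_ring_fan} that there is a purely combinatorial Gysin map $\Gys^A_{\sigma \supface \delta}\colon A^\bullet(\Sigma^\sigma) \to A^{\bullet+1}(\Sigma^\delta)$ sending $\xvar_{\eta'}$ to $\xvar_\eta \xvar_\zeta$, where $\zeta$ is the unique ray of $\sigma$ not contained in $\delta$ and $\eta$ is the cone of $\Sigma^\delta$ corresponding to $\eta'\in\Sigma^\sigma$. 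Inserting this map between $\Psi$ and $\Phi$ splits the given square into a \emph{tropical} square (involving $\Psi$ and $\Gys^\trop$) and a \emph{classical} square (involving $\Phi$ and the classical Gysin). It therefore suffices to verify the two commutations $\Psi\circ\Gys^\trop_{\sigma\supface\delta} = \Gys^A_{\sigma\supface\delta}\circ\Psi$ and $\Phi\circ\Gys^A_{\sigma\supface\delta} = \Gys_{\sigma\supface\delta}\circ\Phi$ separately.

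For the tropical square, I would appeal to the Hodge isomorphism of \cite[Theorem 7.1]{AP-hi}, which identifies $H^{k,k}(\tropXbar^\sigma)$ with $A^k(\Sigma^\sigma)$ as rings by sending the stratum class $[\tropXbar^{\eta'}]$ to $\xvar_{\eta'}$. Under tropical Poincaré duality, $\Gys^\trop_{\sigma\supface\delta}$ is by definition Poincaré dual to the pushforward $i_*^\trop$ along $\tropXbar^\sigma\hookrightarrow\tropXbar^\delta$, and a direct computation at the level of stratum classes shows that this pushforward sends $[\tropXbar^{\eta'}]$ to the class of the face $\eta\cup\zeta$ inside $\tropXbar^\delta$, which matches $\xvar_\eta\xvar_\zeta$ via $\Psi$. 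This compatibility is essentially already implicit in the construction of the tropical Deligne resolution recalled in \cref{subsec:TropicalDeligne}.

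The classical square is where the principal difficulty lies. Recall that $\Phi$ factors as the ring isomorphism $\Phi_{\Sigma^\sigma}\colon A^\bullet(\Sigma^\sigma)\simto A^\bullet(\CP_{\Sigma^\sigma})$, followed by the cycle class map $\mathrm{cyc}_{\Sigma^\sigma}\colon A^\bullet(\CP_{\Sigma^\sigma})\to H^{2\bullet}(\CP_{\Sigma^\sigma})$ and the restriction $r^*$ to $\varXbar^\sigma$; under this composition, $\xvar_{\eta'}$ maps to $[\varXbar^{\eta'}]\in H^\bullet(\varXbar^\sigma)$. Chasing the other path, $\Gys^A_{\sigma\supface\delta}(\xvar_{\eta'})=\xvar_\eta\xvar_\zeta$ is sent by $\Phi$ to $[\varXbar^\eta]\smile[\varXbar^\zeta]\in H^{\bullet+2}(\varXbar^\delta)$. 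Commutativity therefore reduces to identifying this cup product with the Gysin pushforward $\Gys_{\sigma\supface\delta}[\varXbar^{\eta'}]$. On the ambient toric variety $\CP_\Sigma$ the analogous identity is tautological, since by \cite[Corollary 19.2]{Ful84} the cycle class map carries the product of two toric divisor classes to the class of their (transverse) intersection, and the toric Gysin is dual to pullback. The obstacle is the descent of this identity to $\varXbar$: one must use that the toric stratification meets $\varXbar$ transversely, so that the Gysin map for the inclusion $\varXbar^\sigma\hookrightarrow\varXbar^\delta$ is the restriction of its toric counterpart, which in the schön setting tacit throughout this section is guaranteed by Tevelev's theorem \cite[Theorem 1.2]{Tev07}. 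This transversality is the key technical ingredient that makes the square close.
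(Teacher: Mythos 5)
Your proof takes essentially the same route as the paper: you insert the Chow-ring Gysin map $\Gys^A_{\sigma\supface\delta}$ between $\Psi$ and $\Phi$, splitting the square into a tropical piece and a classical piece, and verify each separately. The paper's proof is identical in structure, citing \cite[Remark 3.15]{AP-tht} with \cite[Theorem 7.1]{AP-hi} for the tropical square and functoriality of the cycle class map (\cite[\S 3.2]{Bri96}, \cite[\S 19.2]{Ful84}) for the classical one; your explicit identification of transversality via the schön hypothesis is exactly the ingredient hidden in those citations.
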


\begin{proof}
    Expanding the definition of $\ct$, we obtain the following diagram
    \[\begin{tikzcd}
        {H^\bullet (\tropXbar^\sigma)} & {A^{\bullet/2} (\Sigma^\sigma)} & {H^\bullet (\varXbar^\sigma)} \\
        {H^{\bullet+2} (\tropXbar^\delta)} & {A^{\bullet/2+1}(\Sigma^\delta)} & {H^{\bullet+2} (\varXbar^\delta).}
        \arrow["{\Gys_{\sigma\supface \delta}^\trop}"', from=1-1, to=2-1]
        \arrow["{\Gys_{\sigma\supface \delta}}", from=1-2, to=2-2]
        \arrow["{\Phi}", from=1-2, to=1-3]
        \arrow["{\Phi}", from=2-2, to=2-3]
        \arrow["{\Psi}", from=1-1, to=1-2]
        \arrow["{\Psi}", from=2-1, to=2-2]
        \arrow["{\Gys_{\sigma\supface \delta}}", from=1-3, to=2-3]
    \end{tikzcd}\]
    The first square is commutative by \cite[Remark 3.15]{AP-tht}, in light of \cite[Theorem 1.1]{AP-FY}.
    The commutativity of the second square follows from the functoriality of the cycle class map in light of \cite[Section 3.2]{Bri96} and \cite[Section 19.2]{Ful84}.
\end{proof}

\begin{remark}\label{remark:diagram}
    Let $\varX \subseteq \varT_N$ and $\varX'\subseteq \varT_{N'}$ be two non-singular subvarieties of tori associated to two lattices $N$ and $N'$, with $\tropX$ and $\tropX'$ the corresponding tropicalizations, and two unimodular fans $\Sigma$ and $\Sigma'$ with supports $\tropX$ and $\tropX'$, respectively.

    Assume there exists a morphism of lattices $\phi \colon N \to N'$ which takes cones of $\Sigma$ to cones of $\Sigma'$ such that the induced map $\phi\rest{\tropX}\colon \tropX \to \tropX'$ is surjective. This makes the induced morphism of toric varieties $f \colon \CP_\Sigma \to \CP_{\Sigma'}$ proper \cite[Section 2.4]{Ful-toric}. We denote by $f^\trop\colon \TP_\Sigma \to \TP_{\Sigma'}$ the induced morphism on tropical toric varieties.

    Furthermore, suppose that $f(\varX)=\varX'$. Since $\varXbar$ is compact we have that $f(\varXbar)=\shiftcomp[3]{f(\varX)}=\varXbar'$. This also gives $f^\trop (\tropXbar)=\tropXbar'$ for the canonical compactifications of $\tropX$ and $\tropX'$ with respect to $\Sigma$ and $\Sigma'$. One can then prove the commutativity of the following diagram
    \[ \begin{tikzcd}[ampersand replacement=\&]
        {H^\bullet (\tropXbar')} \rar{\ctind{\varX,\Sigma}} \dar["f^{\trop,*}"']{}\& {H^\bullet (\varXbar')} \dar{f^*} \\
        {H^\bullet (\tropXbar)} \rar["\ctind{\varX',\Sigma'}"']{} \& {H^\bullet (\varXbar).}
      \end{tikzcd}\vspace{-1.5em} \]\qedhere
\end{remark}

\begin{proposition}\label{prop:injectivity}
    Let $\varX \subseteq \varT$ be a subvariety of complex dimension $d$ and $\Sigma$ a unimodular fan with support $\tropX=\trop(\varX)$, and\/ $\varXbar$ and $\tropXbar$ be the compactifications of\/ $\varX$ and $\tropX$ with respect to $\Sigma$. Suppose $\tropXbar$ satisfies tropical Poincaré duality and\/ $\varXbar$ is non-singular. Then $ \ctind{X,\Sigma} \colon H^\bullet (\tropXbar) \to H^\bullet (\varXbar)$ is injective.
\end{proposition}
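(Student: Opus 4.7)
The plan is to reduce the injectivity of $\ct_{X,\Sigma}$ to that of the ring map $\Phi\colon A^\bullet(\Sigma) \to H^{2\bullet}(\varXbar)$, and then run the standard Poincaré duality argument on both sides.

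First, since $\tropXbar$ satisfies tropical Poincaré duality (so in particular, the underlying tropical fanfold is a tropical homology manifold in a neighborhood of $\conezero$), the Hodge isomorphism theorem \cite[Theorem 7.2]{AP-hi} gives that $H^{p,q}(\tropXbar) = 0$ for $p \neq q$ and that $\Psi \colon H^\bullet(\tropXbar) \to A^{\bullet/2}(\Sigma)$ is an isomorphism. By the factorisation $\ct_{X,\Sigma} = \Phi \circ \Psi$ from \cref{def:ct_map}, it therefore suffices to show that $\Phi \colon A^k(\Sigma) \to H^{2k}(\varXbar)$ is injective for every $k$.

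Next, I would invoke two forms of Poincaré duality. On the tropical side, because $\tropXbar$ satisfies tropical Poincaré duality, the Chow ring $A^\bullet(\Sigma)$ satisfies Poincaré duality in the sense that the degree pairing $(a,b)\mapsto \deg(ab)$ on $A^k(\Sigma)\times A^{d-k}(\Sigma)$ is non-degenerate; this is the content of \cite[Theorem 7.2]{AP-hi} combined with the Hodge isomorphism identifying $A^\bullet(\Sigma)$ with $\bigoplus_k H^{k,k}(\tropXbar)$. On the classical side, $\varXbar$ is compact (as $\Sigma$ has support $\trop(\varX)$, \cite[Proposition 2.3]{Tev07}) and non-singular by hypothesis, so $\varXbar$ satisfies classical Poincaré duality with fundamental class giving an integration map $\int_{\varXbar}\colon H^{2d}(\varXbar)\to \Q$.

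The key compatibility to verify is that the two degree maps agree under $\Phi$, namely that
\[ \int_{\varXbar} \Phi(\alpha) = \deg(\alpha) \qquad \text{for all } \alpha \in A^d(\Sigma). \]
Since $A^d(\Sigma)$ is spanned by the classes $\xvar_\eta$ for facets $\eta$, this reduces to the identity $\int_{\varXbar}[\varXbar^\eta] = \wgt_{\varX}(\eta)$. This is precisely the geometric interpretation of the tropical multiplicity recalled in \cref{sec:Tropicalization}: the weight $\wgt_\varX(\eta)$ counts the number of points (with multiplicity) in $\varXbar \cap \CP_\Sigma^\eta = \varXbar^\eta$. I expect this bookkeeping step to be the most delicate point in the argument, and would justify it through the description of $\Phi$ as the composition of $\Phi_\Sigma\colon A^\bullet(\Sigma)\simto A^\bullet(\CP_\Sigma)$ with the cycle class map and the restriction to $\varXbar$, using functoriality of the cycle class map \cite[Section 19.2]{Ful84}.

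Granted this compatibility, the injectivity of $\Phi$ follows by a one-line argument: if $\Phi(\alpha) = 0$ for some $\alpha \in A^k(\Sigma)$, then for every $\beta \in A^{d-k}(\Sigma)$ we have $\Phi(\alpha\beta) = \Phi(\alpha)\Phi(\beta) = 0$ in $H^{2d}(\varXbar)$ since $\Phi$ is a ring homomorphism, and hence $\deg(\alpha\beta) = \int_{\varXbar}\Phi(\alpha\beta) = 0$. Poincaré duality for $A^\bullet(\Sigma)$ then forces $\alpha = 0$, concluding the proof.
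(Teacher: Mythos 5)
Your proof is correct and takes essentially the same approach as the paper's: the paper's terse argument is precisely that $\Psi$ and $\Phi$ commute with the degree maps on top cohomology, and then the perfect pairings coming from tropical Poincaré duality on $H^\bullet(\tropXbar)$ and classical Poincaré duality on $H^\bullet(\varXbar)$ force injectivity. You have merely expanded the two sentences of the paper, in particular spelling out the key identity $\int_{\varXbar}[\varXbar^\eta] = \wgt_\varX(\eta)$ that underlies the degree compatibility. One small remark: the parenthetical claim that tropical Poincaré duality for $\tropXbar$ already implies $\tropX$ is a tropical homology manifold near $\conezero$ is not quite how the implication goes, and the reduction through ``$\Psi$ is an isomorphism'' is a slight detour compared to the paper, which argues directly on $\ct$ itself (so that the off-diagonal vanishing $H^{p,q}(\tropXbar)=0$ for $p\neq q$ comes out as a consequence rather than needing to be invoked up front); but this does not affect the validity of your argument.
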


\begin{proof}
    Both maps $\Psi\colon H^{2d}(\tropXbar) \to A^d(\Sigma)$ and $\Phi \colon A^d(\Sigma) \to H^{2d}(\varXbar)$ commute with the corresponding degree maps. Now for both tropical and classical cohomology, the fact that the products induce perfect pairings implies that $\ctind{X,\Sigma}$ is injective.
\end{proof}

\section{Irrelevance of fan} \label{sec:fanirrelevance}

To be schön, wunderschön, cohomologically tropical, Kähler, or a tropical homology manifold are all properties of the form \enquote{there exists a fan $\Sigma$ such that a specific property holds} with some restriction on the fan, as unimodularity for instance. Informally, we say that such a property is \emph{fan irrelevant} if we can replace \enquote{there exists a unimodular fan} by \enquote{for any unimodular fan} (this is strongly linked with the notion of $\mT$-stability for properties of tropical fans in \cite{AP-hodge-fan}). It is already known that to be schön, Kähler or a tropical homology manifold is fan irrelevant. In this section we prove \cref{thm:ct_fan_independent,thm:wunderschon_fan_independent} about the fan irrelevance of being cohomologically tropical and wunderschön. We begin with a lemma.

\begin{lemma} \label{lem:cohom_trop_trop_hom_mfld}
    Suppose a schön subvariety $\varX \subseteq \varT$ is cohomologically tropical. Then, the tropicalization $\tropX =\trop(\varX)$ is a tropical homology manifold.
\end{lemma}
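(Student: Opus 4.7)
The plan is to unpack the cohomological tropicality hypothesis into two separate structural facts about each star $\Sigma^\sigma$, and then invoke the Hodge-theoretic framework of \cite{AP-hi} to promote them to tropical Poincaré duality on every canonical stratum of $\tropXbar$.

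First I would fix a unimodular fan $\Sigma$ with support $\tropX$ for which $\varX$ is cohomologically tropical, and pick $\sigma \in \Sigma$. The map $\ctind{\tropX^\sigma,\Sigma^\sigma} = \Phi \circ \Psi$ is then an isomorphism by hypothesis. Recalling from \cref{subsec:chow_ring_fan} that $\Psi$ sends every $(p,q)$-class with $p \ne q$ to zero and identifies its restriction to the diagonal with $A^\bullet(\Sigma^\sigma)$ via the Hodge isomorphism \cite[Theorem~7.1]{AP-hi}, and that $\Phi$ takes values only in even-degree classical cohomology, the isomorphism property of $\ct$ forces both (i) the vanishings $H^{p,q}(\tropXbar^\sigma) = 0$ for $p \ne q$ and $H^{\mathrm{odd}}(\varXbar^\sigma) = 0$, and (ii) the ring homomorphism $\Phi \colon A^\bullet(\Sigma^\sigma) \to H^{2\bullet}(\varXbar^\sigma)$ is itself an isomorphism.

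Next I would use (ii) to transport classical Poincaré duality onto the Chow ring. Since $\varX$ is schön, $\varXbar^\sigma$ is smooth and compact, so its even cohomology carries a perfect Poincaré pairing. The isomorphism $\Phi$ is compatible with degrees: on a facet $\eta$ of $\Sigma^\sigma$, the class $\Phi(\xvar_\eta)$ represents the zero-dimensional subscheme $\varXbar^\eta \subseteq \varXbar^\sigma$, whose point count with multiplicity equals $\wgt_{\varX}(\eta)$ by the tropical interpretation of weights recalled in \cref{sec:Tropicalization}. Hence the pairing on $A^\bullet(\Sigma^\sigma)$ obtained by transporting the classical one through $\Phi$ is perfect, proving that $A^\bullet(\Sigma^\sigma)$ satisfies Poincaré duality.

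Combining (i) with Poincaré duality of $A^\bullet(\Sigma^\sigma)$ for every $\sigma \in \Sigma$, I would conclude that each canonical compactification $\tropXbar^\sigma$ satisfies tropical Poincaré duality: the off-diagonal vanishing reduces the tropical cup-product pairing to its diagonal, where via the Hodge isomorphism it is precisely the Chow-ring pairing just shown to be perfect. By the first equivalent characterization in \cref{def:trop_hom_mfld}, this exhibits $\tropX$ as a tropical homology manifold. The main obstacle lies in this last step: converting perfectness of the cup-product pairing on $H^{\bullet,\bullet}(\tropXbar^\sigma)$ into the cap-product-against-the-fundamental-class formulation of tropical Poincaré duality used in \cref{def:trop_hom_mfld}. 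This translation is the content of Theorems 3.20 and 7.9 of \cite{AP-hi}, so the difficulty is really in invoking the correct statement rather than reproving it; by contrast, the extraction of (i) and (ii), and the transfer of Poincaré duality from $\varXbar^\sigma$ to $A^\bullet(\Sigma^\sigma)$, are direct consequences of the definitions of $\ct$, $\Phi$, and $\Psi$ combined with the standard compatibility of tropical weights with intersection numbers in the schön setting.
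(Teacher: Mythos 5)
Your proof is correct and takes essentially the same route as the paper's (one-line) argument: transfer Poincaré duality from $\varXbar^\sigma$ to $\tropXbar^\sigma$ via the isomorphism $\ct$, for every cone $\sigma$, then invoke the definition of tropical homology manifold. Your version is considerably more detailed --- decomposing $\ct = \Phi \circ \Psi$, extracting the off-diagonal and odd-degree vanishings, routing the perfect pairing through the Chow ring $A^\bullet(\Sigma^\sigma)$, and checking degree compatibility via the weight function --- whereas the paper compresses all of this into ``the cohomology groups are isomorphic, and so they verify Poincaré duality.''
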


\begin{proof}
    Let $\Sigma$ be a unimodular fan whose support is $\trop(\varX)$. It follows that the cohomology groups $H^\bullet(\tropXbar^\sigma)$ are all isomorphic to the cohomology groups $H^\bullet(\varXbar^\sigma)$, and so they verify Poincaré duality. We infer that $\tropX$ is a tropical homology manifold.
\end{proof}

Let $\varX$ be a schön subvariety of the torus which is cohomologically tropical. It follows from the previous lemma and the fan irrelevance of being a tropical homology manifold that all the cohomology groups $H^{p,q}(\tropXbar)$ are vanishing provided that $p\neq q$, for the canonical compactification $\tropXbar$ of $\tropX$ with respect to any unimodular fan with support $\tropX$.

Let $\Sigma$ be a unimodular fan with support the fanfold $\tropX$, and let $\sigma$ be a cone in $\Sigma$ of dimension at least two. Let $\Sigma'$ be the barycentric star subdivision of $\Sigma$ obtained by star subdividing $\sigma$, see e.g.~\cite{AP-hodge-fan, Wlo03}. Denote by $\rho$ the new ray in $\Sigma'$. Let $\tropXbar$ and $\tropXbar'$ be the compactifications of $\tropX$ with respect to $\Sigma$ and $\Sigma'$, respectively.

The following theorem provides a description of the Chow ring of $\Sigma'$ in terms of the Chow rings of $\Sigma$ and $\Sigma^\sigma$.

\begin{theorem}[Keel's lemma] \label{thm:keel}
    Let $\J$ be the kernel of the surjective map $\ii^*_{\conezero\subfaceeq\sigma}\colon A^\bullet(\Sigma)\to A^\bullet(\Sigma^\sigma)$ and let
    \[ P(\Tvar) \coloneqq \prod_{\zeta\subface \sigma \\ \dims\zeta=1}(\xvar_\zeta+\Tvar). \]

    There is an isomorphism of Chow groups given by the map
    \[ \chi\colon \rquot{A^\bullet(\Sigma)[\Tvar]}{(\J \Tvar +P(\Tvar))} \simto A^\bullet(\Sigma') \]
    which sends $\Tvar$ to $-\xvar_\rho$ and which verifies
    \[ \forall \zeta \in \Sigma_1, \qquad
    \chi(\xvar_\zeta) = \begin{cases}
        \xvar_\zeta+\xvar_\rho & \textrm{if $\zeta \subface \sigma$,}\\
        \xvar_\zeta & \textrm{otherwise}.
    \end{cases} \]
    In particular this gives a vector space decomposition of $A^\bullet(\Sigma')$ as
    \begin{equation} \label{eqn:keel_chow}
        A^\bullet(\Sigma')\cong A^\bullet(\Sigma)\oplus A^{\bullet-1}(\Sigma^\sigma)\Tvar \oplus \dots \oplus A^{\bullet-\dims{\sigma}+1}(\Sigma^\sigma)\Tvar^{\dims{\sigma}-1}.
    \end{equation}

    In addition, if $\tropX$ is the tropicalization of a schön subvariety $\varX \subseteq \varT$, and $\varXbar$ and $\varXbar'$ are compactifications of\/ $\varX$ with respect to $\Sigma$ and $\Sigma'$, respectively, then we have an isomorphism
    \[ H^\bullet(\varXbar')\cong \rquot{H^\bullet(\varXbar)[\Tvar]}{(\J \Tvar +P(\Tvar))}, \]
    and the decomposition
    \begin{equation} \label{eqn:keel_cohomology}
        H^\bullet(\varXbar')\cong H^\bullet(\varXbar)\oplus H^{\bullet-1}(\varXbar^\sigma)\Tvar \oplus \dots \oplus H^{\bullet-\dims{\sigma}+1}(\varXbar^\sigma)\Tvar^{\dims{\sigma}-1}.
    \end{equation}
    Here, by an abuse of notation, the variable $\Tvar$ denotes the image of $-\xvar_\rho$ in $H^2(\varXbar')$ for the induced map $A^\bullet(\Sigma') \to H^\bullet(\varXbar')$, $\J$ is the kernel of $H^\bullet(\varXbar)\to H^\bullet(\varXbar^\sigma)$, and $P(\Tvar)$ is the image of\/ $\prod_{\zeta\subface \sigma \\ \dims\zeta=1}(\xvar_\zeta+\Tvar)$ in $H^\bullet(\varXbar)[\Tvar]$ under the map $A^\bullet(\Sigma) \to H^{\bullet}(\varXbar)$.
\end{theorem}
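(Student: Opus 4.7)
The approach is to deduce both parts of the statement from the fact that the star subdivision at $\sigma$ corresponds, on the toric side, to the blowup of $\CP_\Sigma$ along the stratum $\CP_\Sigma^\sigma$, with exceptional divisor corresponding to $\rho$. The Chow ring isomorphism is purely combinatorial; the cohomology statement will then be a transfer of this structure across the cycle class map, using the schön hypothesis to control the strict transform.

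For the Chow ring isomorphism, the plan is first to verify that $\chi$ is well-defined by checking that the relations $\J\Tvar$ and $P(\Tvar)$ land in the Stanley–Reisner plus linear ideal defining $A^\bullet(\Sigma')$. The image $\chi(P(\Tvar))$ collapses telescopically to the monomial $\prod_{\zeta\subface\sigma,\,\dims{\zeta}=1}\xvar_\zeta$, which vanishes in $A^\bullet(\Sigma')$ because the rays of $\sigma$ no longer span a single cone after star subdivision — they are separated by $\rho$. For $\J\Tvar$, the key observation is that $-\xvar_\rho$ is the class of the exceptional divisor, and restriction of $A^\bullet(\Sigma')$ to the corresponding toric stratum factors through $A^\bullet(\Sigma^\sigma)$; any element of $\J=\ker(\ii^*_{\conezero\subfaceeq\sigma})$ thus annihilates $\xvar_\rho$. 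Surjectivity of $\chi$ is immediate: $\xvar_\rho = -\chi(\Tvar)$, and every other generator $\xvar_\zeta$ of $A^\bullet(\Sigma')$ is $\chi(\xvar_\zeta)$ if $\zeta\not\subface\sigma$, or $\chi(\xvar_\zeta)+\chi(\Tvar)$ if $\zeta\subface\sigma$. For injectivity, and to obtain the decomposition \eqref{eqn:keel_chow} simultaneously, I would reduce the presentation on the left: modding out $\J\Tvar$ yields $A^\bullet(\Sigma)\oplus\bigoplus_{i\geq 1}A^\bullet(\Sigma^\sigma)\Tvar^i$ (using $A^\bullet(\Sigma)/\J \cong A^\bullet(\Sigma^\sigma)$), and then modding out by $P(\Tvar)$, which is monic in $\Tvar$ of degree $\dims{\sigma}$ with lower coefficients pulled from $A^\bullet(\Sigma^\sigma)$ via $\ii^*_{\conezero\subfaceeq\sigma}$, truncates at $\Tvar^{\dims{\sigma}-1}$. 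This produces exactly \eqref{eqn:keel_chow} as a vector space, and comparison of dimensions with an independent computation of $A^\bullet(\Sigma')$ (for instance via the toric blowup formula) upgrades $\chi$ to an isomorphism.

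For the cohomology part, I would invoke the schön hypothesis to establish that $\varXbar$ meets the stratum $\CP_\Sigma^\sigma$ transversally with intersection the smooth $\varXbar^\sigma$; consequently the strict transform of $\varXbar$ under the toric blowup $\pi\colon\CP_{\Sigma'}\to\CP_\Sigma$ is $\varXbar'$, and the induced map $\varXbar'\to\varXbar$ is the blowup of $\varXbar$ along $\varXbar^\sigma$, with $-\Tvar$ the class of the exceptional divisor. The classical blowup formula (Keel's original lemma in the smooth setting, via the projective bundle formula for $\mathbb{P}(\mathcal{N}_{\varXbar^\sigma/\varXbar})$) then furnishes the presentation $H^\bullet(\varXbar')\cong H^\bullet(\varXbar)[\Tvar]/(\J\Tvar+P(\Tvar))$ and the decomposition \eqref{eqn:keel_cohomology}: the relation $P(\Tvar)$ arises as the Grothendieck relation, after identifying the Chern roots of $\mathcal{N}_{\varXbar^\sigma/\varXbar}$ with the classes of the toric boundary divisors corresponding to rays $\zeta\subface\sigma$ (this is again a consequence of schön-ness). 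Compatibility of the cycle class map $A^\bullet(\Sigma)\to H^{2\bullet}(\varXbar)$ with the one for $\Sigma'$ then shows that the cohomological presentation factors $\chi$ as asserted.

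The main obstacle will be the verification that $\chi(\J\Tvar)$ lands in the defining ideal of $A^\bullet(\Sigma')$, since it requires a careful combinatorial description of how the kernel $\J$ interacts with the class $\xvar_\rho$ of the exceptional ray. On the cohomology side, the parallel obstacle is the identification of the Chern classes of the normal bundle $\mathcal{N}_{\varXbar^\sigma/\varXbar}$ with the toric divisor classes, so that the Grothendieck relation for the projective bundle $\mathbb{P}(\mathcal{N}_{\varXbar^\sigma/\varXbar})$ genuinely coincides with $P(\Tvar)$; this is where the transversality provided by the schön hypothesis is indispensable.
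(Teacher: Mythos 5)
The paper does not supply a proof of this statement: after the lemma it simply writes ``We refer to~\cite{Kee92} and~\cite{AP-Hodge-Fan} for more details and the proof.'' Your proposal reconstructs exactly the argument those references give — the telescoping collapse of $\chi(P(\Tvar))$ to the Stanley–Reisner monomial $\prod_{\zeta\subface\sigma}\xvar_\zeta$, the vanishing of $\chi(\J)\cdot\xvar_\rho$ via restriction to the exceptional stratum factoring through $A^\bullet(\Sigma^\sigma)$, the vector-space decomposition by reducing modulo $\J\Tvar$ and then $P(\Tvar)$, and, on the geometric side, transversality from schön-ness to identify $\varXbar'\to\varXbar$ as the blow-up of $\varXbar$ along $\varXbar^\sigma$, followed by Keel's formula and the identification of the Chern roots of $\mathcal N_{\varXbar^\sigma/\varXbar}$ with the restricted toric divisor classes. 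This is correct and is the intended proof. One small slip: in the Chow-ring paragraph you write that $-\xvar_\rho$ is the class of the exceptional divisor, whereas it is $\xvar_\rho$ (so $\Tvar=-\xvar_\rho=-[E]$); your cohomology paragraph has the sign the right way around, and the argument is unaffected.
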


Decomposition \eqref{eqn:keel_chow}, for instance, means that for any $1 \leq k \leq \dims\sigma$, we have a natural injective map
\[ A^\bullet(\Sigma^\sigma) \hookrightarrow A^\bullet(\Sigma'^{\rho}) \xhookrightarrow{-\Gys_{\rho\supface\conezero}\ } A^{\bullet+1}(\Sigma') \xhookrightarrow{\Tvar^{k-1}} A^{\bullet+k}(\Sigma'). \]
The piece $A^\bullet(\Sigma^\sigma)\Tvar^k$ in the above decomposition then denotes the image of the above map. We refer to~\cite{Kee92} and~\cite{AP-hodge-fan} for more details and the proof.

\smallskip
Two unimodular fans with the same support are called \emph{elementary equivalent} if one can be obtained from the other by a barycentric star subdivision. The \emph{weak equivalence} between unimodular fans with the same support is then defined as the transitive closure of the elementary equivalence relation. We will need the weak factorization theorem, stated as follows.

\begin{theorem}[Weak factorization theorem~\cite{Wlo97, Mor96}] \label{thm:equivalent_fan}
    Two unimodular fans with the same support are always weakly equivalent.
\end{theorem}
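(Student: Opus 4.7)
The plan is to reduce to the case of a refinement and then use a cobordism argument to decompose the refinement into elementary moves. First, I would establish that two unimodular fans $\Sigma_1, \Sigma_2$ with the same support admit a common unimodular refinement $\Sigma$ obtained from each by a finite sequence of barycentric star subdivisions. Start from the set-theoretic common refinement (intersect cones), and resolve its possibly non-unimodular cones by iterated barycentric star subdivisions, ordered by decreasing multiplicity. The crucial observation, used repeatedly, is that a barycentric star subdivision of a unimodular cone yields only unimodular cones: if $e_1,\dots,e_k$ extend to a lattice basis and $\rho = e_1+\cdots+e_k$ is their barycenter, then $\{\rho, e_1,\dots, \widehat{e_i},\dots, e_k\}$ also completes to a lattice basis. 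This reduces the problem to the case where $\Sigma'$ is a unimodular refinement of a unimodular fan $\Sigma$.

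For the second step, I would follow the toric cobordism strategy of Morelli and W\l odarczyk. Construct a fan $\widetilde{\Sigma}$ in $N_\R \oplus \R$ of dimension $n+1$ whose two ``boundaries''—obtained by projecting the upper and lower faces onto $N_\R$—recover $\Sigma$ and $\Sigma'$ respectively. After desingularizing $\widetilde{\Sigma}$ itself into a unimodular fan via barycentric star subdivisions, order its interior walls by the height function on the last coordinate and cross them one at a time. Each wall crossing modifies the induced fan on $N_\R$ by replacing the star of a cone with its ``opposite'' star, and the goal is to identify each such move as either a barycentric star subdivision or the inverse of one, relative to a unimodular ambient fan.

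The main obstacle is precisely this last identification: to ensure every elementary wall crossing is a single barycentric star subdivision (or inverse), rather than a more general bistellar operation, one must tightly control the local combinatorics of $\widetilde{\Sigma}$ at each wall so that the two adjacent maximal cones meet in a codimension-one face corresponding to the subdivision of one unimodular cone at its barycenter. This is the technical core of the W\l odarczyk and Morelli proofs, requiring $\pi$-desingularization and a further refinement of the cobordism in order to achieve this local simplicity everywhere. Once this is in place, weak equivalence of $\Sigma$ and $\Sigma'$ follows by induction on the number of interior walls, and combining the two steps yields the statement for arbitrary pairs of unimodular fans with common support.
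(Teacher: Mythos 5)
The paper does not prove \cref{thm:equivalent_fan}; it cites it as a black box from W\l odarczyk~\cite{Wlo97} and Morelli~\cite{Mor96} and uses it only to reduce the proof of \cref{thm:ct_fan_independent} to a single barycentric star subdivision. So there is no in-paper proof against which to compare your sketch, and you should keep in mind that you are outlining a deep theorem whose published proofs are long and delicate.

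Evaluating your sketch on its own merits, there is a genuine gap in your step 1. You claim that the common unimodular refinement $\Sigma$, obtained by resolving the intersection fan, is \enquote{obtained from each of $\Sigma_1$, $\Sigma_2$ by a finite sequence of barycentric star subdivisions,} but nothing in the construction you describe establishes this. The passage $\Sigma_1 \leadsto \Sigma_1 \cap \Sigma_2$ is a refinement, not a star subdivision (and the cones of $\Sigma_1 \cap \Sigma_2$ need not even be simplicial); and the subsequent barycentric resolution is carried out relative to the cone structure of $\Sigma_1 \cap \Sigma_2$, not of $\Sigma_1$, so it does not exhibit $\Sigma$ as an iterated barycentric star subdivision of $\Sigma_1$. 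What you are implicitly asserting is one direction of the toric \emph{strong} factorization problem — that a unimodular refinement of a unimodular fan is always reachable by blow-ups alone — which is strictly stronger than weak factorization and is exactly the hard content the theorem avoids by allowing a zigzag. Indeed, if step 1 held as stated you would already have a chain of barycentric subdivisions $\Sigma_1 \to \Sigma \leftarrow \Sigma_2$, the theorem would be proved, and step 2 would be superfluous; the fact that you then invoke step 2 is a sign the claim overreaches. The correct content of step 1 is only that a common unimodular refinement $\Sigma$ \emph{exists}, reducing the problem to the case where one unimodular fan refines another; the zigzag connecting $\Sigma_i$ to $\Sigma$ must then come from the cobordism machinery.

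For step 2, your description of the cobordism strategy is a fair caricature of the Morelli--W\l odarczyk approach, and you correctly flag that $\pi$-desingularization and the local control of wall crossings is where all the difficulty lives. But a sketch that explicitly defers the \enquote{technical core} of a hard theorem — ensuring that every wall crossing is a single barycentric star subdivision or its inverse, rather than a general bistellar move — is not a proof of that theorem. As written, step 1 is wrong and step 2 is a pointer to the literature rather than an argument.
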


We are now in a position to prove the independence of being cohomologically tropical from the chosen fan for schön varieties.

\begin{theorem} \label{thm:ct_fan_independent}
    Suppose that the subvariety $\varX \subseteq \varT$ is schön and let $\tropX = \trop(\varX)$ be its tropicalization. The following are equivalent.
    \begin{enumerate}
        \item There exists a unimodular fan $\Sigma$ with support $\tropX$ such that $\varX$ is cohomologically tropical with respect to $\Sigma$.
        \item For any unimodular fan $\Sigma$ with support $\tropX$, $\varX$ is cohomologically tropical with respect to $\Sigma$.
    \end{enumerate}
\end{theorem}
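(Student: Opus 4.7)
The implication (2) $\Rightarrow$ (1) is trivial, so the goal is (1) $\Rightarrow$ (2). My plan is to combine the weak factorization theorem (\cref{thm:equivalent_fan}) with Keel's lemma (\cref{thm:keel}). First, assuming $\varX$ is cohomologically tropical with respect to some unimodular fan $\Sigma_0$, \cref{lem:cohom_trop_trop_hom_mfld} shows that $\tropX$ is a tropical homology manifold, and by fan irrelevance of this property we get $H^{p,q}(\tropXbar^\sigma)=0$ for $p\neq q$ for any unimodular fan, so the map $\ct$ becomes the composite of the isomorphism $H^{\bullet,\bullet}(\tropXbar^\sigma)\cong A^\bullet(\Sigma^\sigma)$ with the ring map $\Phi\colon A^\bullet(\Sigma^\sigma)\to H^{2\bullet}(\varXbar^\sigma)$ sending $\xvar_{\gamma}$ to the class of the corresponding stratum. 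By \cref{thm:equivalent_fan} it suffices to show that cohomological tropicality is invariant under a single barycentric star subdivision $\Sigma \to \Sigma'$ at a cone $\sigma\in\Sigma$, as well as under its inverse.

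Let $\rho$ be the new ray of $\Sigma'$. I would proceed by a double induction: on the dimension of $\tropX$ and, inside this, on the codimension of the cone $\tau\in\Sigma'$ at which we evaluate the condition. For any $\tau\in\Sigma'$ the star fan $\Sigma'^{\tau}$ falls into one of the following cases, checked directly from the combinatorics of star subdivision: either $\Sigma'^{\tau}$ coincides with a star fan of $\Sigma$, or it is the barycentric star subdivision of a star fan of $\Sigma$ at the image of $\sigma$, or (when $\rho\in\tau$) it identifies with a star fan of some $\Sigma^{\sigma'}$ for a cone $\sigma'\supfaceeq\sigma$ of $\Sigma$. In each case the induction hypothesis provides the cohomological tropicality statement on $\Sigma'^\tau$ from that on the corresponding stars of $\Sigma$ and $\Sigma^\sigma$, reducing the argument to $\tau=\conezero$.

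For $\tau=\conezero$, I apply \cref{thm:keel} simultaneously on the tropical and classical sides. Using the identification $A^\bullet(\Sigma)\cong H^{\bullet,\bullet}(\tropXbar)$ (and the analogous one for $\Sigma'$), Keel's formula produces the two decompositions
\begin{align*}
  H^{\bullet,\bullet}(\tropXbar') &\cong H^{\bullet,\bullet}(\tropXbar)\oplus \bigoplus_{k=1}^{\dims{\sigma}-1} H^{\bullet-k,\bullet-k}(\tropXbar^{\sigma})\cdot\Tvar^{k},\\
  H^{2\bullet}(\varXbar') &\cong H^{2\bullet}(\varXbar)\oplus \bigoplus_{k=1}^{\dims{\sigma}-1} H^{2(\bullet-k)}(\varXbar^{\sigma})\cdot\Tvar^{k}.
\end{align*}
The key point is that $\ct$ is a ring homomorphism, sends $\xvar_\rho$ to $-\Tvar$ on both sides, and under the isomorphism $H^{\bullet,\bullet}(\tropXbar)\cong A^{\bullet}(\Sigma)$ it agrees with the map $\Phi$ of \cref{subsec:chow_ring_fan}; functoriality of $\Phi$ under pullback by the toric blow-up $\CP_{\Sigma'}\to\CP_\Sigma$ (cf.\ \cref{remark:diagram}) and the explicit formulas in \cref{thm:keel} then show that $\ct_{X,\Sigma'}$ maps the tropical Keel decomposition term-by-term into the classical Keel decomposition. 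The induction hypothesis gives isomorphisms on each factor $H^{\bullet,\bullet}(\tropXbar)\to H^{2\bullet}(\varXbar)$ and $H^{\bullet-k,\bullet-k}(\tropXbar^{\sigma})\to H^{2(\bullet-k)}(\varXbar^{\sigma})$, so the direct sum is also an isomorphism. This proves the invariance under a barycentric star subdivision, and the same decomposition, read in reverse, yields the converse.

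The main obstacle is verifying that the Keel decomposition on the classical side is indeed compatible with $\ct$ stratum-by-stratum, rather than only at the global level. This amounts to checking that the inclusions $A^{\bullet-k}(\Sigma^\sigma)\Tvar^k\hookrightarrow A^\bullet(\Sigma')$ described after \cref{thm:keel} (given by Gysin into the $\rho$-divisor followed by multiplication by $\Tvar^{k-1}$) correspond, via $\Phi$, to the composition of the classical Gysin into $\varXbar'^{\rho}\cong\varXbar^\sigma$ with multiplication by the corresponding cohomology class; this is where the schön hypothesis and functoriality of the cycle class map under proper pushforward are used, together with \cref{thm:compare_gysin}. Once this compatibility is secured, the rest of the argument is a formal bookkeeping induction.
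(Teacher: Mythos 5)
Your overall strategy matches the paper exactly: reduce via the weak factorization theorem (\cref{thm:equivalent_fan}) to a single barycentric star subdivision, then compare the Keel decompositions (\cref{thm:keel}) on the tropical and classical sides, with the key technical point being the compatibility of those decompositions with $\ct$ via \cref{remark:diagram} and \cref{thm:compare_gysin}. Your implication (2) $\Rightarrow$ (1) and your use of \cref{lem:cohom_trop_trop_hom_mfld} to kill the off-diagonal tropical cohomology are also as in the paper.

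The gap is in your case analysis of the star fans $\Sigma'^{\tau}$ for $\rho \in \tau$. You claim such a $\Sigma'^{\tau}$ ``identifies with a star fan of some $\Sigma^{\sigma'}$ for a cone $\sigma' \supfaceeq \sigma$''. This is false. The correct structure is a \emph{product} $\Delta \times \Theta$, where $\Delta$ is a star fan $\Sigma^{\eta}$ of $\Sigma$ at a cone $\eta \supfaceeq \sigma$, and $\Theta$ is the fan of a projective space $\CP^{k}$ with $k = \dims\sigma - \dims{\sigma \cap \tau}$. Since $\sigma$ is never a face of a cone of $\Sigma'$, we always have $k \geq 1$ when $\rho \in \tau$, so the projective factor is never trivial. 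Concretely, take $\Sigma$ to be the fan of $\CP^{2}$, $\sigma$ a maximal cone, $\rho$ the barycentric ray and $\tau = \rho$: then $\Sigma'^{\rho}$ is the fan of $\CP^{1}$ (the exceptional divisor), whereas every $\Sigma^{\sigma'}$ with $\sigma' \supfaceeq \sigma$ is $\{\conezero\}$. So your induction hypothesis cannot be invoked on $\Sigma'^{\tau}$ as you describe. To close the gap one must observe that the corresponding classical stratum decomposes as $\varXbar'^{\tau} \cong \varY \times \CP^{k}$ for a suitable (possibly blown-up) stratum $\varY$ of $\varXbar$, and then combine the Künneth formula on both the tropical and classical sides with the trivial fact that $\CP^{k}$ is cohomologically tropical (both $H^{\bullet}(\TP^{k})$ and $H^{\bullet}(\CP^{k})$ are canonically $A^{\bullet}(\Theta)$). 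This product decomposition is exactly what the paper makes explicit, and it is the one ingredient your proposal is missing.
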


\begin{proof}
    Suppose that the subvariety $\varX$ of the torus $\varT$ is schön. Let $\tropX= \trop(\varX)$. Let $\Sigma$ be a unimodular fan with support $\tropX$ such that $\varX$ is cohomologically tropical with respect to $\Sigma$. Let $\Sigma'$ be a second unimodular fan with support $\tropX$. We need to prove that $\varX$ is cohomologically tropical with respect to $\Sigma'$. By the weak factorization theorem, it will be enough to assume that $\Sigma$ and $\Sigma'$ are elementary equivalent.

    We consider the compactifications $\varXbar'$ and $\tropXbar'$ of $\varX$ and $\tropX$ with respect to $\Sigma'$, and those with respect to $\Sigma$ by  $\varXbar$ and $\tropXbar$.

    \smallskip
    Consider first the case where $\Sigma'$ is obtained as a barycentric star subdivision of $\Sigma$. Denote by $\sigma$ the cone of $\Sigma$ which has been subdivided and by $\rho$ the new ray of $\Sigma'$.

    We start by explaining the proof of the isomorphism ${H^\bullet (\tropXbar')} \simto {H^\bullet (\varXbar')}$. We use the notation preceding \cref{thm:equivalent_fan}. By Keel's lemma, we get
    \[ A^\bullet(\Sigma')\cong \rquot{A^\bullet(\Sigma)[\Tvar]}{(\J \Tvar+P(\Tvar))} \qquad \textrm{and} \qquad H^\bullet(\varXbar')\cong \rquot{H^\bullet(\varXbar)[\Tvar]}{(\J \Tvar+P(\Tvar))} \]
    with $\J$ and $P(\Tvar)$ as in \cref{thm:keel}.

    By~\cite[Theorem 1.1]{AP-FY}, see \cref{subsec:chow_ring_fan}, we have isomorphisms $A^p(\Sigma')\simto H^{p,p}(\tropXbar')$ and $A^p(\Sigma)\simto H^{p,p}(\tropXbar)$ for each $p$. Moreover, since $\varX$ is cohomologically tropical by \cref{lem:cohom_trop_trop_hom_mfld}, all the cohomology groups $H^{p,q}(\tropXbar')$ and $H^{p,q}(\tropXbar)$ are vanishing for $p\neq q$.

    The isomorphism ${H^\bullet (\tropXbar')} \simto {H^\bullet (\varXbar')}$ now follows from the commutativity of the diagram in \cref{remark:diagram}, the isomorphisms ${H^\bullet (\tropXbar)} \simto {H^\bullet (\varXbar)}$ and ${H^\bullet (\tropXbar^\sigma)} \simto {H^\bullet (\varXbar^\sigma)}$, and the compatibility of the decompositions in Keel's lemma in the tropical and algebraic settings with respect to these isomorphisms.

    \smallskip
    Consider now an arbitrary cone $\delta$ of $\Sigma'$ and denote by $\eta$ the smallest cone of $\Sigma$ which contains $\delta$. The star fan $\Sigma'^\delta$ of $\delta$ in $\Sigma'$ is isomorphic to a product of two fans $\Delta \times \Theta$ with $\Delta$ a unimodular fan living in $N^\eta_\R$ and $\Theta$ a unimodular fan living in $\rquot{N_{\sigma, \R}}{N_{\delta\cap \sigma,\R}}$. In the case $\eta \not\subfaceeq \sigma$, the first fan $\Delta$ coincides with the star fan $\Sigma^\eta$ of $\eta$ in $\Sigma$. Otherwise, when $\eta \subfaceeq \sigma$, $\Delta$ is the fan obtained from $\Sigma^\eta$ by subdividing the cone $\sigma/\eta$. The other fan $\Theta$ is $\conezero$ unless $\delta$ contains the ray $\rho$ in which case, $\Theta$ is the fan of the projective space of dimension $\dims \sigma - \dims{\sigma \cap \delta}$. Similarly, $\varXbar'^\delta$ admits a decomposition into a product $\varY \times \varZ$, where $\varY = \varXbar^\eta$ in the case $\eta\not\subfaceeq \sigma$, and $\varY$ is the blow-up of $\varXbar^\sigma$ in $\varXbar^\eta$ in the other case $\eta \subfaceeq \sigma$. And $\varZ$ is $\CP^0$, that is a point, unless $\delta$ contains $\rho$ in which case $\varZ\cong \CP^{\dims \sigma - \dims{\sigma \cap \delta}}$.

    The isomorphism ${H^\bullet (\tropXbar'^\delta)} \simto {H^\bullet (\varXbar'^\delta)}$ for $\delta$ can be then obtained from the above description, and by observing that when $\sigma$ is face of $\eta$ and $\Delta$ is the subdivision of $\eta/\sigma$ in $\Sigma^\eta$, we can apply the argument used in the first treated case above to $\varXbar^\eta$ and $\tropXbar^\eta$ to conclude.

    \smallskip
    Consider now the case where $\Sigma$ is obtained as a barycentric star subdivision of $\Sigma'$. We only discuss the isomorphism ${H^\bullet (\tropXbar')} \simto {H^\bullet (\varXbar')}$, the other isomorphisms ${H^\bullet (\tropXbar'^\delta)} \simto {H^\bullet (\varXbar'^\delta)}$ for $\delta\in \Sigma'$ can be obtained by using the preceding discussion. The cohomology of $\tropXbar'$ appears as a summand of the cohomology of $\tropXbar$ according to the decomposition in Keel's lemma. Similarly, the cohomology of $\varXbar'$ is a summand of the cohomology of~$\varXbar$. Using the compatibility of the decompositions in the Keel's lemma, the isomorphism ${H^\bullet (\tropXbar)} \simto {H^\bullet (\varXbar)}$ induces an isomorphism ${H^\bullet (\tropXbar')} \simto {H^\bullet (\varXbar')}$ between the two summands.
\end{proof}

\begin{theorem}\label{thm:wunderschon_fan_independent}
    Suppose that the subvariety $\varX \subseteq \varT$ is wunderschön with respect to some unimodular fan. Then $\varX$ is wunderschön with respect to any unimodular fan with support $\tropX =\trop(\varX)$.
\end{theorem}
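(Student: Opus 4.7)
By the weak factorization theorem (\cref{thm:equivalent_fan}), any two unimodular fans on $\tropX$ are linked by a chain of barycentric star subdivisions and their inverses, so I would reduce to proving the equivalence when $\Sigma'$ is the barycentric star subdivision of a unimodular fan $\Sigma$ at a cone $\sigma \in \Sigma$, with new ray $\rho \in \Sigma'$. I would then show that the wunderschön property transports in both directions between $\Sigma$ and $\Sigma'$.

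The key technical input will be a product decomposition of the open strata of $\Sigma'$ in terms of those of $\Sigma$, obtained by restricting to the central vertex the star-fan factorization $\Sigma'^\delta \cong \Delta \times \Theta$ established in the proof of \cref{thm:ct_fan_independent}. For each $\delta \in \Sigma'$, letting $\eta(\delta) \in \Sigma$ denote the smallest cone of $\Sigma$ containing $\delta$, the central-vertex restriction of the closed stratum decomposition $\varXbar^\delta \cong \varY \times \varZ$ yields an isomorphism
\[ \varX^\delta \;\cong\; \varX^{\eta(\delta)} \times (\C^*)^{r(\delta)}, \qquad r(\delta) = \dims{\eta(\delta)} - \dims\delta \geq 0. \]
The cases to check are: $\delta \in \Sigma \cap \Sigma'$, where $\eta(\delta) = \delta$ and $r(\delta) = 0$; $\delta = \rho$, where $\eta(\delta) = \sigma$ and $r(\delta) = \dims\sigma - 1$; and $\delta = \rho + \gamma$ for $\gamma \neq \conezero$, where $\eta(\delta) = \sigma + \gamma$. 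Conversely, every cone $\sigma_0 \in \Sigma$ is realized as $\eta(\delta)$ for some $\delta \in \Sigma'$: take $\delta = \sigma_0$ when $\sigma \not\subfaceeq \sigma_0$, $\delta = \rho$ when $\sigma_0 = \sigma$, and $\delta = \rho + \gamma$ with $\gamma$ any facet of $\sigma_0$ obtained by dropping a ray of $\sigma$ when $\sigma_0 \supsetneq \sigma$.

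I would then invoke the Künneth formula with mixed Hodge structures to conclude: since $(\C^*)^r$ is smooth, connected, and has $H^j((\C^*)^r)$ pure of weight $2j$ for every $j$, each of the wunderschön conditions on $\varX^{\eta(\delta)} \times (\C^*)^{r(\delta)}$—non-singularity, connectedness, and purity of weight $2k$ on $H^k$—is equivalent to the corresponding condition on $\varX^{\eta(\delta)}$. Both directions of the fan-independence now follow: wunderschönness with respect to $\Sigma$ implies, by running over $\delta \in \Sigma'$, wunderschönness with respect to $\Sigma'$; and conversely, wunderschönness with respect to $\Sigma'$ implies the condition on each $\varX^{\sigma_0}$ by choosing a $\delta \in \Sigma'$ with $\eta(\delta) = \sigma_0$. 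The main technical care lies in verifying the product decomposition combinatorially for each type of cone $\delta \in \Sigma'$, especially the new cones $\rho + \gamma$; but this is a direct specialization to the central vertex of the decomposition already obtained for closed strata in the proof of \cref{thm:ct_fan_independent}.
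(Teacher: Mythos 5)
The paper gives no argument here beyond ``similar to the proof of \cref{thm:ct_fan_independent}; we omit the details,'' and that earlier proof is phrased in terms of \emph{closed} strata $\varXbar'^\delta$ and the cohomological decomposition from Keel's lemma. Your route is a genuine (and in my view cleaner) alternative: you work directly with \emph{open} strata, which is exactly where the wunderschön conditions live, and you reduce everything to the identity
\[
\varX'^\delta \;\cong\; \varX^{\eta(\delta)} \times (\C^*)^{r(\delta)},
\qquad r(\delta)=\dims{\eta(\delta)}-\dims\delta,
\]
followed by the Künneth formula for mixed Hodge structures. All three wunderschön conditions (non-singularity, connectedness, purity of weight $2k$) pass through a product with $(\C^*)^r$ in both directions, and you correctly observe that every $\sigma_0\in\Sigma$ occurs as $\eta(\delta)$ for some $\delta\in\Sigma'$, so the equivalence runs both ways. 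Combined with weak factorization (\cref{thm:equivalent_fan}) this proves the theorem. The one point I would tighten is the justification of the open-stratum product itself: you obtain it ``by restricting to the central vertex'' the closed-stratum decomposition $\varXbar'^\delta\cong\varY\times\varZ$ quoted in the proof of \cref{thm:ct_fan_independent}, but that literal product decomposition of closed strata is delicate (the exceptional divisor $\varXbar'^\rho=\P\bigl(\mathcal{N}_{\varXbar^\sigma/\varXbar}\bigr)$ is a projective bundle that need not be globally trivial). The open-stratum identity is nonetheless true and best argued directly: the toric blow-down restricts to a surjective morphism of algebraic tori $\varT'^\delta\to\varT^{\eta(\delta)}$ whose kernel is a subtorus of rank $r(\delta)$, and such a quotient of tori splits, giving $\varT'^\delta\cong\varT^{\eta(\delta)}\times(\C^*)^{r(\delta)}$; the schön hypothesis (transversality of $\varXbar$ with the toric strata) then guarantees that $\varX'^\delta$ is precisely the preimage of $\varX^{\eta(\delta)}$ under this projection, hence the product. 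With that replacement your argument is complete, and it gains over the Keel's-lemma route precisely because it avoids translating back and forth between closed-stratum cohomology and the weight filtration on open strata.
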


\begin{proof}
    The proof of this theorem is similar to the proof given above for \cref{thm:ct_fan_independent}. We omit the details.
\end{proof}

\section{Divisorial cohomology}\label{sec:divisorial}

In this section, we prove \cref{thm:wunderschon_Divisorial} which states that the cohomology of a wunderschön variety is divisorial.

The cohomology of a non-singular algebraic variety $\varZ$ is \emph{divisorial} if there is a surjective ring homomorphism $\Q[ \xvar_1, \dots, \xvar_s ] \to H^{\bullet}(\varZ)$ such that the image of each $\xvar_i$ is $[\varD_i] \in H^2(\varZ)$, the Poincaré dual of some divisor $\varD_i$ of $\varZ$. Similarly, the Chow ring $A^{\bullet}(\varZ)$ is \emph{divisorial} if there is a surjective ring homomorphism $\Q[ \xvar_1, \dots, \xvar_s ] \to A^{\bullet}(\varZ)$ such that the image of each $\xvar_i$ is the class of a divisor $\varD_i$ of $\varZ$. In this case, we also say that the (Chow) cohomology of $\varZ$ is generated by the divisors $\varD_1, \dots, \varD_s$. Notice that if $\varZ$ is projective and its cohomology is divisorial, then all its cohomology is generated by algebraic cycles and the Hodge structure on the cohomology is Hodge-Tate.

The Chow ring of any non-singular complex toric variety is divisorial and generated by the toric boundary divisors, see \cite[Section 3.1]{Bri96} and \cref{subsec:chow_ring_fan}. It follows, using our previous notations, that if the the map $\ctind{\varX,\Sigma} \colon H^\bullet (\tropXbar) \to H^\bullet (\varXbar)$ is a surjection, then the cohomology of\/ $\varXbar$ is divisorial and generated by the irreducible components of\/ $\varXbar \setminus \varX$.

\begin{theorem}\label{thm:wunderschon_Divisorial}
    Let $\varX \subseteq \varT$ be a wunderschön subvariety. Let $\varXbar$ be the compactification of\/ $\varX$ with respect to a unimodular fan $\Sigma$ with support $\tropX = \trop(\varX)$. Then the cohomology of\/ $\varXbar$ is divisorial and generated by irreducible components of $\varXbar \setminus \varX$.
\end{theorem}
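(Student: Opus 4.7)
The plan is to argue by induction on $d = \dim(\varXbar)$, the base case $d=0$ being immediate since a zero-dimensional wunderschön variety is a point and its cohomology is $\Q$ in degree $0$. For the inductive step I would combine two ingredients. First, each stratum $\varX^\zeta$ with $\zeta \in \Sigma_1$ is again wunderschön with respect to the star fan $\Sigma^\zeta$: its strata are the $\varX^\sigma$ for $\sigma \supfaceeq \zeta$, and the required connectedness and purity of weight $2k$ on $H^k$ are inherited from $\varX$. By the induction hypothesis, $H^\bullet(\varXbar^\zeta)$ is therefore a polynomial in the divisor classes $[\varXbar^\mu]_{\varXbar^\zeta}$ for $\mu \in \Sigma_2$ containing $\zeta$. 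Second, \cref{lemma:wunderschon_deligne_exactness} applied to the zero cone $\conezero$, together with the vanishing $H^{2k-1}(\varXbar) = 0$ given by the lemma preceding it, supplies, for each $k$, a surjection
$$\bigoplus_{\zeta \in \Sigma_1} H^{2k-2}(\varXbar^\zeta) \xrightarrow{\;\;\Gys\;\;} H^{2k}(\varXbar) \longrightarrow 0,$$
built from the Gysin maps $\Gys_{\zeta \supface \conezero}$ of the inclusions $i_\zeta \colon \varXbar^\zeta \hookrightarrow \varXbar$. It is thus enough to verify that every element of the image of $\Gys$ is a polynomial in the classes $[\varXbar^{\zeta'}]$ for $\zeta' \in \Sigma_1$, since these are precisely the irreducible components of the boundary $\varXbar \setminus \varX$.

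The key input is the projection formula. For two rays $\zeta, \zeta'$ spanning a two-dimensional cone $\mu = \zeta + \zeta' \in \Sigma_2$, the simple normal crossing structure of the boundary of the schön compactification yields the transversal identity $\varXbar^\mu = \varXbar^\zeta \cap \varXbar^{\zeta'}$, and consequently the equality $[\varXbar^\mu]_{\varXbar^\zeta} = i_\zeta^*[\varXbar^{\zeta'}]$ in $H^2(\varXbar^\zeta)$. Combined with the induction hypothesis, every $\alpha \in H^{2k-2}(\varXbar^\zeta)$ admits a lift $\tilde\alpha \in H^{2k-2}(\varXbar)$, itself a polynomial in the classes $[\varXbar^{\zeta'}]$ for $\zeta' \in \Sigma_1$, satisfying $\alpha = i_\zeta^*\tilde\alpha$. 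The projection formula then gives
$$\Gys_{\zeta \supface \conezero}(\alpha) = i_{\zeta,*}(i_\zeta^*\tilde\alpha) = [\varXbar^\zeta] \cdot \tilde\alpha,$$
which is once again a polynomial in the divisor classes $[\varXbar^{\zeta'}]$. Together with the surjectivity above, this closes the induction and yields the divisorial generation by the irreducible components of $\varXbar \setminus \varX$.

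The main obstacle I anticipate is the transversal identification $[\varXbar^\mu]_{\varXbar^\zeta} = i_\zeta^*[\varXbar^{\zeta'}]$, which relies on the simple normal crossing structure of $\varXbar \setminus \varX$ coming from schönness, as well as the bookkeeping needed to choose the lift $\tilde\alpha$ as a genuine polynomial expression in the global divisor classes rather than an arbitrary cohomology class restricting to $\alpha$. Once these geometric facts are in place, the projection formula propagates divisoriality from the strata $\varXbar^\zeta$ up to $\varXbar$ in a purely formal way, and the induction terminates.
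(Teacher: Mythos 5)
Your proof is correct and follows essentially the same route as the paper's: induction on dimension, surjectivity of the Gysin map $\bigoplus_{\zeta \in \Sigma_1} H^{2k-2}(\varXbar^\zeta) \to H^{2k}(\varXbar)$ furnished by \cref{lemma:wunderschon_deligne_exactness}, and the transfer of divisoriality through the Gysin map. Where you unpack the projection formula $\Gys_{\zeta\supface\conezero}(i_\zeta^*\tilde\alpha) = [\varXbar^\zeta]\cdot\tilde\alpha$ together with the transversality identity $[\varXbar^{\zeta+\zeta'}]_{\varXbar^\zeta} = i_\zeta^*[\varXbar^{\zeta'}]$, the paper simply packages the identical computation into the commutativity of a single diagram between polynomial rings and cohomology rings.
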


\begin{proof}
    We proceed by induction on the dimension of $\varX$. If $\varX$ is a point, then this is trivial. Notice also that if $\varX$ is a wunderschön curve then $\varXbar$ must be $\CP^1$ and hence the cohomology is divisorial as $H^{\bullet}(\CP^1)\cong \Q[\xvar]/\langle \xvar^2 \rangle$.

    We have the following commutative diagram
    \[ \begin{tikzcd}
        \bigoplus_{\rho\in \Sigma_1} \Q[\xvar_{\zeta}\ |\ \zeta\in\Sigma_1\text{ and }(\rho+\zeta)\in\Sigma_2] \rar{\bigoplus_\rho f_\rho}\dar{\bigoplus_\rho -\cdot\xvar_\rho} & \bigoplus_{\rho\in \Sigma_1} H^\bullet(\varXbar^\rho) \dar{\Gys} \\
        \Q[\xvar_\zeta\ |\ \zeta\in\Sigma_1] \rar{f} & H^{\bullet+2}(\varXbar),
    \end{tikzcd} \]
    where $\rho+\zeta$ is the cone generated by the rays $\rho$ and $\zeta$, the $f_\rho$ are surjective ring homomorphisms which send $\xvar_\zeta$ to $[\varXbar^{\rho+\zeta}]$, and $f$ maps $\xvar_\zeta$ to $[\varXbar^\zeta]$. Since $\varX$ is wunderschön the maps
    \[ \bigoplus_{\rho \in \Sigma_1} H^{k}(\varXbar^\rho) \to H^{k +2}(\varXbar) \]
    from the Deligne weight spectral sequence are all surjections for $k \geq 0$ and we deduce that $f$ is surjective. Therefore, the cohomology of $\varXbar$ is divisorial and is generated by the components of $\varXbar \setminus \varX$.
\end{proof}

\section{Proof of the main theorem}

We now turn to proving \cref{thm:main}.
\begin{theorem}\label{thm:main}
    Let\/ $\varX \subseteq \varT$ be a schön subvariety with support $\tropX =\trop(\varX)$. Then the following statements are equivalent.
    \begin{enumerate}
        \item\label{thm:main:wunder} $\varX$ is wunderschön and $\tropX$ is a tropical homology manifold,
        \item\label{thm:main:ct} $\varX$ is cohomologically tropical.
    \end{enumerate}
    Moreover, if any of these statements holds, then $\tropX$ is Kähler.
\end{theorem}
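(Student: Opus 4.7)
The plan is to prove the two implications by running the Deligne weight spectral sequence for $\varX^\sigma$ against the tropical Deligne resolution for $\tropX^\sigma$ from \cref{subsec:TropicalDeligne}, both indexed by the star fan $\Sigma^\sigma$ of a cone $\sigma \in \Sigma$. The morphism $\ct$ provides a termwise comparison between these two complexes, and \cref{thm:compare_gysin} guarantees that it intertwines the Gysin differentials, so a diagram chase transfers information between the two sides.

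For $(2)\Rightarrow(1)$, the tropical homology manifold statement is exactly \cref{lem:cohom_trop_trop_hom_mfld}. For the wunderschön property, fix $\sigma$. Since $\tropX$ is a tropical homology manifold, $H^{p,q}(\tropXbar^\tau)=0$ for $p\neq q$ at every closed stratum, and the $\ct$-isomorphism forces $H^{2k+1}(\varXbar^\tau)=0$ for all $\tau$ and $k$. Hence every odd row of the Deligne spectral sequence for $\varX^\sigma$ vanishes. Each even row $E_1^{\bullet,2l}$ matches, via $\ct$ on closed strata, the tropical Deligne resolution of $H^l(\tropX^\sigma)$, which is an exact complex; this forces $E_2^{-p,2l}=0$ unless $p=l$, and consequently $\Gr^W_{2l}H^k(\varX^\sigma)=0$ for $l\neq k$, yielding purity of weight $2k$. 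Connectedness of $\varX^\sigma$ follows because $H^0(\varXbar^\sigma)\cong H^0(\tropXbar^\sigma)=\Q$, so the smooth stratum $\varXbar^\sigma$ is irreducible and its non-empty open subset $\varX^\sigma$ is connected.

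For $(1)\Rightarrow(2)$, I would proceed by induction on $\dim\varXbar^\sigma$. The base case is a facet, where \cref{prop:wgt1wunderschoen} forces $\varXbar^\sigma$ to be a single point, $\tropXbar^\sigma$ is likewise a point, and $\ct$ is trivially an isomorphism. In the inductive step, the wunderschön exact sequence of \cref{lemma:wunderschon_deligne_exactness} and the tropical Deligne resolution for $\tropX^\sigma$ are both exact complexes whose terms have the same shape, and by induction $\ct$ is already an isomorphism on every term involving $\varXbar^{\sigma+\gamma}$ with $\gamma\neq\conezero$. Using \cref{thm:compare_gysin} together with exactness of both complexes, the rightmost term of each sequence is realized as the same quotient of the preceding term, forcing $\ct\colon H^{2k}(\tropXbar^\sigma)\to H^{2k}(\varXbar^\sigma)$ to be an isomorphism. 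Vanishing of odd cohomology on both sides (the wunderschön lemma preceding \cref{lemma:wunderschon_deligne_exactness} and the tropical homology manifold hypothesis) then yields the isomorphism in all degrees.

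For the Kähler conclusion, combining $\ct$ with the Hodge isomorphism $A^\bullet(\Sigma^\sigma)\simto\bigoplus_k H^{k,k}(\tropXbar^\sigma)$ from \cite{AP-hi} identifies $A^\bullet(\Sigma^\sigma)$ with $H^{2\bullet}(\varXbar^\sigma)$ as graded rings, and shows this cohomology is Hodge--Tate. Choosing $\Sigma$ quasi-projective, $\varXbar^\sigma$ is smooth projective, and strictly convex conewise linear functions on $\Sigma^\sigma$ pull back from ample toric classes on $\CP_{\Sigma^\sigma}$, restricting to ample classes on $\varXbar^\sigma$. Classical Hodge theory on $\varXbar^\sigma$ then transports Poincaré duality, hard Lefschetz, and the Hodge--Riemann bilinear relations along the ring isomorphism, yielding the Kähler package for each $A^\bullet(\Sigma^\sigma)$. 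The main obstacle I anticipate is the sign bookkeeping in the diagram chase: one must verify that the Gysin signs in the two resolutions really agree termwise so that \cref{thm:compare_gysin} upgrades to an isomorphism of complexes, and that the induction closes uniformly over all cones $\sigma$ and all cohomological degrees $k$.
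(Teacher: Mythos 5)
Your overall architecture — running the Deligne weight spectral sequence for $\varX^\sigma$ against the tropical Deligne resolution for $\tropX^\sigma$, using $\ct$ as the termwise comparison and \cref{thm:compare_gysin} to intertwine the differentials — is precisely the paper's strategy, and the direction $(2)\Rightarrow(1)$ and the Kähler conclusion are handled the same way. However, there is a genuine gap in your inductive step for $(1)\Rightarrow(2)$ in cohomological degree two. For $k\geq 2$ the argument that ``the rightmost term is the same quotient of the preceding term'' is sound, because the two terms immediately to the left of $H^{2k}(\tropXbar^\sigma)$ and $H^{2k}(\varXbar^\sigma)$ are both of the form $\bigoplus H^{\mathrm{even}}$ of compactified proper strata, so they are $\ct$-isomorphic by induction, the connecting maps are Gysin maps controlled by \cref{thm:compare_gysin}, and the five lemma closes. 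But for $k=1$ the sequence collapses to the short exact sequence
\[
0 \longrightarrow H^1(\tropX^\sigma) \longrightarrow \bigoplus_{\zeta\supface\sigma,\ \dims\zeta=\dims\sigma+1} H^0(\tropXbar^\zeta) \longrightarrow H^2(\tropXbar^\sigma) \longrightarrow 0,
\]
and its classical counterpart. The term $H^1(\tropX^\sigma)$ is not the cohomology of a compactified stratum, the first arrow is the integration (tropically) respectively logarithmic residue (classically) map and is \emph{not} a Gysin map, so neither the induction hypothesis nor \cref{thm:compare_gysin} controls it. Having an isomorphism on the middle terms only gives you \emph{surjectivity} of $\ct\colon H^2(\tropXbar^\sigma)\to H^2(\varXbar^\sigma)$; injectivity is not forced, since nothing yet says the kernels of the two outgoing maps have the same dimension. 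The paper supplies the missing injectivity via \cref{prop:injectivity}, which uses compatibility of $\ct$ with the degree maps together with Poincaré duality on both $\tropXbar^\sigma$ (a tropical homology manifold) and $\varXbar^\sigma$ (a smooth projective variety). You should invoke this, or an equivalent duality argument, explicitly; as written the $k=1$ step does not close. The sign bookkeeping you worry about is not the real obstruction — it is already handled in \cite[Section 8]{AP-hi} and in \cref{thm:compare_gysin}.
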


\begin{proof}
    We begin by assuming that $\varX$ is wunderschön and that $\tropX$ is a tropical homology manifold, and prove that $\varX$ is cohomologically tropical. We must show that the maps $\ctind{\varX^\sigma,\Sigma^\sigma} \colon H^\bullet (\tropXbar^\sigma) \to H^\bullet (\varXbar^\sigma)$ are isomorphisms for all $\sigma\in \Sigma$. Notice that $\varX$ is non-singular since it is wunderschön.

    If $\varX$ is of dimension $0$ and wunderschön it consists of a single point. Therefore, its tropicalization is a point of weight $1$ thus $\varX$ is cohomologically tropical. We proceed by induction on the dimension of $\varX$. Therefore, we can assume that each of the $\varX^\sigma$ is cohomologically tropical for all cones $\sigma \in \Sigma$ not equal to the origin.

    Since $\varX$ is schön, let $\varD = \varXbar \setminus \varX$ be the simple normal crossing divisor of the compactification. The Deligne weight spectral sequence for the tropical compactification $(\varXbar, \varD)$ of $\varX$ abuts in the associated graded objects of the weight filtration of the cohomology of $H^k(\varX)$. Since $\varX$ is wunderschön, the $E_1$-page of Deligne spectral sequence extends to exact rows by \cref{lemma:wunderschon_deligne_exactness}, with the morphisms being sums of Gysin maps. In the tropical setting, since $X$ is a tropical homology manifold, there are tropical Deligne resolutions \cref{subsec:TropicalDeligne}, where the maps are sums of tropical Gysin maps.

    Now by induction, $\ctind{\varX^\sigma, \Sigma^\sigma}\colon H^\bullet (\tropXbar^\sigma) \to H^\bullet(\varXbar^\sigma)$ is an isomorphism, and moreover the appropriate commutative diagrams using the classical and tropical Gysin maps commute by \cref{thm:compare_gysin}. We may therefore identify the two exact sequences. Applying the five lemma in the cases $k\geq 2$, exactness gives us isomorphisms $H^k(\tropX)\to H^k(\varX)$ and $\ctind{X,\Sigma}\colon H^{2k}(\tropXbar) \to H^{2k}(\varXbar)$. For $k=0$, since $\varX$ is assumed to be connected, there is an isomorphism $H^0(\tropXbar)\cong \Q \cong H^0(\varXbar)$, and it merely remains to show the claim for $k=1$.

    We consider the following commutative diagram
    \[ \begin{tikzcd}
        0 & {H^1(\tropX)} & {\bigoplus\limits_{\zeta \in \Sigma_{1}} H^0(\tropXbar^\zeta)} & {H^{2}(\tropXbar)} & 0 \\
        0 & {H^1(\varX)} & {\bigoplus\limits_{\zeta \in \Sigma_{1}} H^0(\varXbar^\zeta)} & {H^2(\varXbar)} & 0
        \arrow[from=2-2, to=2-3]
        \arrow["g", from=2-3, to=2-4]
        \arrow[from=2-4, to=2-5]
        \arrow[from=2-1, to=2-2]
        \arrow[from=1-4, to=1-5]
        \arrow[from=1-1, to=1-2]
        \arrow[from=1-2, to=1-3]
        \arrow[from=1-3, to=1-4]
        \arrow["{\ctind{\varX,\Sigma}}", from=1-4, to=2-4]
        \arrow["{\bigoplus \ctind{\varX^\sigma, \Sigma^\sigma}}", from=1-3, to=2-3]
        \arrow[from=1-2, to=2-2].
    \end{tikzcd} \]

    By induction, the middle vertical arrow is an isomorphism, and we wish to show that the rightmost vertical arrow is an isomorphism. By a diagram chase, exactness of the lower row implies that this arrow is surjective. The injectivity follows from \cref{prop:injectivity}. Therefore, the map $\ctind{\varX^\sigma, \Sigma^\sigma} \colon {H^{2}(\tropXbar)} \to {H^2(\varXbar)} $ is an isomorphism. Together with our induction assumption on the maps $\ctind{\varX^\sigma, \Sigma^\sigma}$ this proves that $\varX$ is cohomologically tropical.

    \smallskip
    Now assume that $\varX$ is cohomologically tropical. By \cref{lem:cohom_trop_trop_hom_mfld}, we know that $\tropX$ is a tropical homology manifold. It remains to show that $\varX$ is wunderschön. We again proceed by induction on dimension as the case for points is trivial. We equip $\varX$ with the tropical compactification $\varXbar$ given by $\Sigma$, such that all open $\varX^\sigma$ are wunderschön by induction, for $\sigma$ different from the central vertex of $\Sigma$. We have $H^0(\varXbar) \cong H^0(\tropXbar)$ by hypothesis, and $H^0(\tropXbar)\cong \Q$, thus $\varXbar$ is connected and so is~ $\varX$. It remains to show that the mixed Hodge structure on $H^k(\varX)$ is pure of weight $2k$ for each $k$. This follows from comparing the Deligne weight spectral sequence and tropical Deligne resolution by \cref{thm:compare_gysin}, using that all the maps $\ctind{\varX^\sigma, \Sigma^\sigma}$ are isomorphisms. Hence $\varXbar$ is wunderschön.

    \smallskip
    Finally we prove that if $\varX$ is cohomologically tropical, then $\tropX$ is Kähler. By  \cref{lem:cohom_trop_trop_hom_mfld}, we know that $\tropX$ is a tropical homology manifold. There exists a unimodular fan $\Sigma$ with support $\tropX$ such that $\Sigma$ is quasi-projective. It follows that the Chow rings $A^{\bullet/2}(\Sigma^\sigma)$, $\sigma \in \Sigma$, are isomorphic to $H^\bullet(\varXbar^\sigma)$. Moreover, since $\Sigma$ is quasi-projective, and $\varX$ is schön, $\varXbar^\sigma$ is a non-singular projective variety, and so its cohomology verifies the Kähler package. We conclude that $\tropX$ is Kähler.
\end{proof}

\begin{theorem}[Isomorphism of cohomology on open strata]\label{thm:openCT}
    Suppose that $\varX \subseteq \varT$ is schön and cohomologically tropical. Let $\Sigma$ be any unimodular fan with support $\tropX =\trop(\varX)$. Then we obtain isomorphisms
     \[ \ct\colon H^k (\tropX^{\sigma}) \simto H^k(\varX^{\sigma}) \]
   for all $\sigma \in \Sigma$ and all $k$.
\end{theorem}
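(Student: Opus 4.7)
The plan is to observe that the isomorphism on open strata is essentially already produced inside the proof of \cref{thm:main}, and that it suffices to replay that argument stratum by stratum.

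First, by \cref{thm:main}, the hypothesis that $\varX$ is schön and cohomologically tropical implies that $\varX$ is wunderschön and $\tropX$ is a tropical homology manifold. Fix $\sigma \in \Sigma$. The star fan $\Sigma^\sigma$ is a unimodular fan with support $\tropX^\sigma$, and the stratum $\varX^\sigma \subseteq \varT^\sigma$ is itself schön and cohomologically tropical with respect to $\Sigma^\sigma$: this follows directly from \cref{def:cohom_tropical} together with the correspondence between cones of $\Sigma^\sigma$ and cones of $\Sigma$ containing $\sigma$. Applying \cref{thm:main} once more, $\varX^\sigma$ is wunderschön and $\tropX^\sigma$ is a tropical homology manifold.

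Next, I would compare two exact sequences at the level of the stratum $\sigma$. The top row is the tropical Deligne resolution from \cref{subsec:TropicalDeligne} applied to $\tropX^\sigma$, and the bottom row is the Deligne weight exact sequence from \cref{lemma:wunderschon_deligne_exactness} applied to $\varX^\sigma$. Both sequences have the same shape, with leftmost terms $H^k(\tropX^\sigma)$ and $H^k(\varX^\sigma)$ respectively, middle terms indexed by cones $\gamma$ with $\gamma \supface \sigma$ carrying the cohomology of $\tropXbar^\gamma$ and $\varXbar^\gamma$, and rightmost terms $H^{2k}(\tropXbar^\sigma) \to 0$ and $H^{2k}(\varXbar^\sigma) \to 0$. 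By \cref{thm:compare_gysin}, the map $\ct$ intertwines tropical and classical Gysin homomorphisms, so the resulting ladder commutes.

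All vertical arrows in the ladder except possibly the leftmost $\ct\colon H^k(\tropX^\sigma)\to H^k(\varX^\sigma)$ are isomorphisms, by cohomological tropicality of $\varX$ applied to the closed strata $\varXbar^\gamma$ for $\gamma \supfaceeq \sigma$. The five lemma then yields the desired isomorphism for $k \geq 2$. The cases $k = 0$ and $k = 1$ are handled exactly as at the end of the proof of \cref{thm:main}: connectedness of $\varX^\sigma$, inherited from the isomorphism $H^0(\tropXbar^\sigma)\cong H^0(\varXbar^\sigma)\cong \Q$, gives $k = 0$; and $k = 1$ follows from a short diagram chase giving surjectivity, combined with \cref{prop:injectivity} for injectivity. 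The main ``obstacle'' is really only bookkeeping: checking that the forward half of the proof of \cref{thm:main} applies uniformly with $\varX^\sigma$ and $\Sigma^\sigma$ in place of $\varX$ and $\Sigma$. This works precisely because the definition of cohomological tropicality was formulated to hold stratum by stratum, so no additional hypothesis is required beyond what the theorem already assumes.
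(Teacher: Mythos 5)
Your proposal is correct and follows essentially the same route as the paper: both compare the tropical Deligne resolution for $\tropX^\sigma$ with the Deligne weight exact sequence for $\varX^\sigma$, use \cref{thm:compare_gysin} to see that $\ct$ intertwines the Gysin maps, and conclude the leftmost vertical arrow is an isomorphism. One small remark: your separate treatment of $k=0$ and $k=1$ is superfluous here — unlike in the proof of \cref{thm:main}, where the middle vertical maps had to be established by induction, you already know from cohomological tropicality that \emph{all} non-leftmost vertical arrows are isomorphisms, so the standard five-lemma argument (padding with zeros where the rows are short) handles every $k$ uniformly, exactly as the paper's terse statement that the isomorphism of resolutions ``induces'' the desired isomorphism.
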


\begin{proof}
    It suffices to prove the statement for $\varX$, since if $\varX$ is cohomologically tropical so are all strata $\varX^{\sigma}$. It follows from the proof of \cref{thm:main}, that if $\varX$ is cohomologically tropical, then $\varX$ is wunderschön and hence the $2k$-th row of the $1$st page of the Deligne weight spectral sequence provides a resolution of $H^k(\varX)$ for all $k$. Moreover, the maps $\ctind{} \colon H^\bullet (\tropXbar^\sigma) \to H^\bullet (\varXbar^\sigma)$ are isomorphisms for all strata and they commute with the tropical and complex Gysin maps. Therefore, we obtain an isomorphisms of the resolutions which induces isomorphisms $\ctind{} \colon H^k(\tropX^\sigma) \to H^k (\varX^\sigma)$ for all $\sigma \in \Sigma$ and all $k$.
\end{proof}

 \section{Globalization} \label{sec:globalization}

We discuss a natural extension of the main theorem of \cite{IKMZ}. We follow the setting of that work. Let $\pi \colon \famX \to D^*$ be an algebraic family of non-singular complex algebraic varieties in $\CP^n$ over the punctured disk $D^*$.  By base change, the family $\famX$ gives rise to a subvariety $\famX_\eta$ of $\P^n_{\mathbb K}$ where $\mathbb K = \C((t))$ is the field of formal Laurent series with complex coefficients. (Here, $\eta$ refers to the generic fiber.) We denote by $Z\subseteq \TP^n$ the tropicalization of the family, defined as the extended tropicalization of the subvariety  $\famX_\eta \hookrightarrow \P^n_{\mathbb K}$.

We suppose $Z$ admits a unimodular triangulation. This is always possible after a base change of the form $D^* \to D^*$, $z\mapsto z^k$, for $k\in \Z_+$. Using the triangulation, we construct a degeneration of $\CP^n$ to an arrangement of toric varieties, and taking the closure of the family $\famX$ inside this toric degeneration leads to a family $\overline{\famX}$ extended over the full punctured disk $D$. By Mumford's proof of the semistable reduction theorem, we can always find a triangulation, after a suitable base change, such that the extended family is regular and the fiber over zero is reduced and simple normal crossing. This is known as a semistable extension of the family $\pi \colon \famX \to D^*$.

Denote by $\famX_0$ the fiber at zero of the extended family. Note that since the extended family is obtained by taking the closure of the family in a toric degeneration of $\CP^n$, each open stratum in $\famX_0$ will be naturally embedded in an algebraic torus. For $ t \in D^*$ denote by $\famX_t$ the fiber of $\pi$ over $t$.

\begin{theorem}\label{thm:globalization}
    Let $\pi \colon \famX \to D^*$ be an algebraic family of subvarieties in $\CP^n$ parameterized by the punctured disk and let $\pi \colon \overline \famX \to D$ be a semistable extension. If the tropicalization $Z\subseteq \TP^n$ is a tropical homology manifold and all the open strata in $\famX_0$ are wunderschön, then $H^{p,q}(Z)$ is isomorphic to the associated graded piece $W_{2p}/W_{2p-1}$ of the weight filtration in the limiting mixed Hodge structure $H_{\lim}^{p+q}$. The odd weight graded pieces in $H_{\lim}^{p+q}$ are all vanishing.

    Moreover, for $t \in D^*$, we have $\dim H^{p,q}(\famX_t) = \dim H^{p,q}(Z)$, for all non-negative integers $p$ and $q$.
\end{theorem}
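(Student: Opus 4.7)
The plan is to reduce the statement, in the spirit of \cite{IKMZ}, to an identification of two spectral sequences. On the algebraic side, the semistable extension $\overline{\famX} \to D$ gives rise to the Steenbrink spectral sequence, whose $E_1$-page assembles the cohomologies $H^{\bullet}(\famXbar_0^\sigma)$ of the closed strata of the central fiber, with the classical Gysin maps as differentials, and which abuts at the associated graded pieces of the weight filtration on $H^{\bullet}_{\lim}$. On the tropical side, the tropical Deligne resolutions of \cref{subsec:TropicalDeligne} glue, as in \cite{IKMZ}, into a spectral sequence whose $E_1$-page assembles the cohomologies $H^{\bullet}(\overline{Z}^\sigma)$ of the canonical compactifications of the star fans, with tropical Gysin maps as differentials, and which computes $H^{p,q}(Z)$.

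The first step is to verify that each open stratum $\famX_0^\sigma$ is cohomologically tropical. By hypothesis each such stratum is wunderschön, and $Z$ being a tropical homology manifold implies that the tropicalization $Z^\sigma$ of $\famX_0^\sigma$ is again a tropical homology manifold. \cref{thm:main} then gives cohomological tropicality of $\famX_0^\sigma$, and \cref{thm:openCT} produces isomorphisms $\ct \colon H^{\bullet}(\famXbar_0^\sigma) \simto H^{\bullet}(\overline{Z}^\sigma)$ for every stratum and every degree.

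The second step is to match the differentials. By \cref{thm:compare_gysin}, the isomorphisms $\ct$ intertwine the classical and tropical Gysin maps. Hence $\ct$ produces an isomorphism of the two $E_1$-pages as bicomplexes, which at $E_\infty$ yields $H^{p,q}(Z) \simeq \Gr^W_{2p} H^{p+q}_{\lim}$. The vanishing of odd weight graded pieces comes for free: the tropical $E_1$-page is concentrated in even total degrees, and the isomorphism transports this vanishing to the algebraic side.

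For the final statement on Hodge numbers of a smooth fiber $\famX_t$, we combine two inputs. First, by \cref{thm:wunderschon_Divisorial}, each $H^{\bullet}(\famXbar_0^\sigma)$ is divisorial, hence Hodge–Tate; propagating this through the Steenbrink spectral sequence forces each $\Gr^W_{2p} H^{p+q}_{\lim}$ to be of pure Hodge type $(p,p)$. Second, by the classical theory of limit mixed Hodge structures (Schmid's nilpotent orbit theorem), the dimension of $\Gr_F^p H^k(\famX_t)$ agrees with $\dim \Gr_F^p H^k_{\lim}$. Expanding the latter via the Deligne–Hodge decomposition and using the Hodge–Tate character of each weight graded piece, a dimension count yields
\[
\dim H^{p,q}(\famX_t) = \dim \Gr^W_{2p} H^{p+q}_{\lim} = \dim H^{p,q}(Z).
\]
The main obstacle will be to construct the tropical spectral sequence globally from the local Deligne resolutions and to verify that it matches Steenbrink's as a genuine bicomplex, not merely term-by-term; this requires a sheaf-theoretic or bifiltered-complex argument in the spirit of \cite{IKMZ}, now available once cohomological tropicality is secured at every stratum by \cref{thm:main}.
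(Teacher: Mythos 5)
Your outline follows the same strategy as the paper---reduce to a comparison of the classical and tropical Steenbrink spectral sequences via the stratum-wise isomorphisms coming from \cref{thm:main} and \cref{thm:compare_gysin}, then use the Hodge--Tate character of the limit and the Schmid-type invariance of Hodge numbers to conclude---but it leaves the central technical step open. Where the paper invokes the Steenbrink--Tropical comparison theorem of \cite{AP-tht,IKMZ} as a citation, you attempt to rebuild that comparison from the local tropical Deligne resolutions, and you yourself flag this as ``the main obstacle.'' That obstacle is exactly the nontrivial content of \cite{AP-tht,IKMZ}: gluing the local resolutions into a globally degenerating spectral sequence and matching it, as a bifiltered complex, to the classical Steenbrink spectral sequence is not automatic once the stratum-wise isomorphisms are in place. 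As written, your proposal therefore reduces the theorem to a result you have not proved, whereas the paper discharges this in one line by citation.

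A second, concrete omission: you never verify the Kähler hypothesis needed for the comparison theorem. The paper checks this explicitly by noting that, by cohomological tropicality of the strata of $\famX_0$, the Chow ring of any quasi-projective unimodular fan supported on a local fanfold of $Z$ is the cohomology ring of a nonsingular projective variety and hence satisfies the Kähler package. This is what guarantees the degeneration properties of the tropical Steenbrink spectral sequence; your argument nowhere secures it.

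Two smaller points: the isomorphisms you need are on the closed strata, and those come directly from \cref{def:cohom_tropical} rather than from \cref{thm:openCT}, which concerns the open strata; also $\ct$ runs from tropical to classical cohomology, not the reverse as written. Your final dimension count via the Hodge--Tate property and the equality of limit and nearby-fiber Hodge numbers is correct and matches the paper, which concludes as in Corollary 2 of \cite{IKMZ}.
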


\begin{proof}
    Denote by $\Delta$ the dual complex of $\famX_0$. By construction, $\Delta$ naturally lives in the tropicalization $Z$. For each simplex $\delta\in \Delta$, denote by $\varZ^\delta$ the corresponding (open) stratum in $\famX_0$ and denote by $Z^\delta$ the corresponding unimodular star fan in $Z$. Let $\comp{\varZ}^{\delta}$ and  $\comp{Z}^{\delta}$ be the corresponding compactifications. Note that $\comp{\varZ}^{\delta}$ is also the closure of $\varZ^\delta$ in $\famX_0$.

    Since $Z$ is a tropical homology manifold, the local fanfolds appearing in the tropical variety $Z$ are all tropical homology manifolds. Moreover, for each $\delta \in \Delta$, the Chow ring of the star fan $Z^\delta$ is also the tropical cohomology ring of $\comp{Z}^{\delta}$. Since $\varZ^\delta$ is cohomologically tropical, these rings are as well isomorphic to the cohomology ring of $\comp{\varZ}^\delta$.

    The Steenbrink spectral sequence~\cite{Ste76} for $\famX$ degenerates at page two and gives the weight $b$ part of the limit mixed Hodge structure $H^d_{\lim}$ in each cohomological degree $d$. The first page of the Steenbrink sequence is given by
    \[ \varST_1^{a,b} \coloneqq \bigoplus_{s\geq\abs a \\ s\equiv a\pmod 2} \varST_1^{a,b,s} \]
    where
    \[ \varST_1^{a,b,s}\coloneqq \bigoplus_{\delta\in \Delta \\ \dims\delta = s}H^{a+b-s}(\comp{\varZ}^{\delta}). \]
     The differential $\d \colon \varST_1^{a,b} \to \varST^{a+1, b}$ is given by a signed sum of Gysin and restriction maps.
     (The correct formulation involves taking the Tate twist $H^{a+b-s}(\comp{\varZ}^{\delta})(\frac{a-s}2)$ in $\varST^{a,b,s}$ so that all the terms have weight $b$; we omit them for sake of simplification.)
    
     Denoting the weight filtration in the limit mixed Hodge structure  $H^d_{\lim}$ by $W_{\bullet}H^d_{\lim}$, and setting  
    \[
    \grw_b H^{d}_{\lim} \coloneqq W_{b}H^d_{\lim}/W_{b-1}H^d_{\lim},
    \] 
    we have
    \[
        \grw_b H^{d}_{\lim} \simeq H^{d-b}(\varST^{\bullet,b}).
    \]
    The wunderschön assumption implies that the cohomology of $\varZ^\delta$ is of Hodge-Tate type for all $\delta\in \Delta$. By the Deligne weight spectral sequence, this implies that the same holds for the cohomology of $\comp{\varZ}^{\delta}$.  Therefore, the odd cohomology groups of $\comp{\varZ}^\delta$ all vanish. Since $a-s$ is even, we infer that for $b$ odd, all the terms $\varST_1^{a,b,s}$ vanish. In other words, the odd weight graded pieces of the limit mixed Hodge structures all vanish. Furthermore, the limit mixed Hodge structure will be of Hodge-Tate type.
    
    We set $b=2p$ and $d=p+q$, so that in weight $2p$ and degree $p+q$, we get
    \[
        \grw_{2p} H^{p+q}_{\lim} \simeq H^{q-p}(\varST^{\bullet,2p}).
    \]

    The tropical Steenbrink spectral sequence for the tropical homological manifold $Z$ is given in a similar way by 
    \[ \ST_1^{a,2p} \coloneqq \bigoplus_{s\geq\abs a \\ s\equiv a\pmod 2} \ST_1^{a,2p,s}, \qquad \ST_1^{a,2p,s} := \bigoplus_{\delta\in \Delta \\ \dims\delta = s}H^{a+2p-s}(\comp Z^{\delta}), \]
    with differential $\d \colon \ST_1^{a,2p} \to \ST^{a+1, 2p}$ a signed sum of Gysin and restriction maps. The Steenbrink-Tropical comparison theorem proved in \cite{AP-tht, IKMZ} implies that 
    \[
    H^{p,q}(Z) =  H^{q-p}(\ST^{\bullet,2p}).
    \]
    
    By assumption on the degeneration, we get isomorphisms between the cohomology groups $H^{\bullet}(\comp\varZ^{\delta}) \simeq H^{\bullet}(\comp Z^{\delta})$.  Comparing the two Steenbrink sequences $\varST^{\bullet, 2p}$ and $\ST^{\bullet, 2p}$, we deduce that $\grw_{2p} H^{p+q}_{\lim} \simeq H^{p,q}(Z)$. This gives the first part.
    
 To prove the second statement, note that since the limit mixed Hodge structure on $H^d_{\lim}$ is Hodge-Tate, the $p$-th part of the limit Hodge filtration $F^{\bullet}_{\lim}$ on $H^{d}_{\lim}$ contributes only in the graded piece $\grw_{2p} H^{p+q}_{\lim}$. This implies that the $H^{p,q}$ piece in the Hodge decomposition of $H^{p+q}(\famX_t)$ has dimension equal to that of $\grw_{2p}H^{p+q}(Z)$, finishing the proof of the theorem.
\end{proof}

From the proof, we deduce the following statement that shows that degenerations appearing in the above theorem are all maximal. 
\begin{theorem}\label{thm:max-degenerate} Notations as in Theorem~\ref{thm:globalization}, the family $\famX \to D^*$ is maximally degenerate. 
\end{theorem}
\begin{proof} Each closed stratum in $\famX_0$ has a cohomology of Hodge-Tate type. Steenbrink spectral sequence shows that the limit mixed Hodge structure is Hodge-Tate. 
\end{proof}

We discuss maximal degenerations further in Section~\ref{sec:discussion-max-degenerate}.

\section{Discussions}

\subsection{Examples}

In this section, we give various examples of varieties verifying some but not all conditions of the main \cref{thm:main}. These examples tend to demonstrate that the main theorem cannot be weakened.

\subsubsection{A wunderschön variety which is not cohomologically tropical}
    Take $N = \Z^2$. Let $\varX \subseteq \varT_N$ be the conic given by the equation $a + bz_1 + cz_2 + dz_1z_2 = 0$ for generic complex coefficients $a,b,c$ and $d$. The variety $\varX$ is $\CP^1$ with four points removed. This is a wunderschön variety: looking at the compactification $\varXbar \subseteq (\CP^1)^2$, $\varX$ is non-singular and the intersections with torus orbits are the points hence non-singular, so that $\varX$ is schön. Moreover, each of the points removed is trivially wunderschön. Finally, the Deligne weight spectral sequence shows that $\varX$ has pure Hodge structure. However, the tropicalization $\tropX$ of $\varX$ is the union of the axes in $\R^2$, which is not uniquely balanced, \ie, $\dim H^2(\tropX)=2>1$. This means that $\tropX$ is not a tropical homology manifold (see \cite[Theorem 4.8]{Aks21}). Moreover, computing the cohomology groups of $\tropXbar$, we obtain $\dim H^0(\tropXbar)=1$, $\dim H^1(\tropXbar) = 0$ and $\dim H^2(\tropXbar)=2$, which differs from the cohomology groups of the sphere $\varXbar$.

\smallskip
\subsubsection{A schön variety with pure strata, whose tropicalization is a tropical homology manifold but which is not cohomologically tropical}
    Let $\varX$ be a generic conic in $(\C^*)^2$. The variety $\varX$ is $\CP^1$ with six points removed. Its tropicalization is the usual tropical line equipped with weights equal to $2$ on all edges, hence again a tropical homology manifold by \cite[Theorem~4.8]{Aks21}. The variety $\varX$ is schön since it is non-singular, and each one of the three strata consists of two distinct points, hence it is non-singular. The mixed Hodge structure on $\varX$ is pure, as the Deligne weight spectral sequence shows that $\Gr_1^W H^1 (\varX)=H^1(\varXbar) = 0$. However, it is not wunderschön since its strata are not connected. The map $\ct\colon H^\bullet(\tropXbar) \to H^\bullet(\varXbar)$ is an isomorphism: it maps the class of a point in $\tropXbar$ to twice the class of a point in $\varXbar$. Nevertheless, $\varX$ is not cohomologically tropical since, for any ray $\zeta$ of $\tropX$, $H^0(\varXbar^\zeta)\cong \Q^2$ but $H^0(\tropXbar^\zeta)\cong \Q$.

\smallskip
\subsubsection{A schön variety which is not pure nor cohomologically tropical and whose tropicalization is a tropical homology manifold}
    Consider the punctured elliptic curve $\varX$ in $(\C^*)^2$ of equation $az_1^2+bz_2+cz_1z_2^2=0$ for generic complex coefficents $a,b$ and $c$. Topologically it is a torus punctured in three points. The tropicalization is the unimodular tropical line of weight one with rays generated by $(2,1),(-1,1)$ and $(-1,-2)$, which is a tropical homology manifold. The variety $\varX$ is non-singular and connected, and each of the three strata at infinity of its compactification is a point hence non-singular and connected. Hence $\varX$ is schön. The cohomology group $H^1(\varXbar)$ is nontrivial of dimension $2$. However, $H^1(\tropXbar)$ is trivial. Hence $\varX$ is not cohomologically tropical. This is because $\varX$ is not wunderschön. More precisely, $H^1(\varX)$ is not pure of weight~2. Indeed, by the Deligne weight spectral sequence $\Gr_1^W(H^1(\varX)) \cong H^1(\varXbar) \neq 0$.

\smallskip
\subsubsection{A non-schön variety which is cohomologically tropical}
    Once again, $N$ is of dimension 2. Let $\varX \subseteq \varT_N$ be given by the equation $(z_1-a)(z_2-b) = 0$ for $a,b \neq 0$. The variety $\varX$ is a reducible nodal curve with two components both being $\CP^1$ with two punctures. The tropicalization is again the union of the two coordinate axes in $\R^2$, which is not a tropical homology manifold, and the variety $\varX$ is not schön as it is singular. However, for each line of the cross, the cocycle associated to this line is mapped to the cocycle associated to the corresponding sphere. This is an isomorphism between $H^2(\tropX)$ and $H^2(\varX)$. Since $H^0(\tropX)$ is trivially isomorphic to $H^0(\varX)$ and other cohomology groups are trivial, we deduce that $\varX$ is cohomologically tropical.

\subsection{Hyperplane arrangement complements} \label{subsec:hyperplanecomps}

We will now see that all three properties of \cref{thm:main} are satisfied for complements of projective hyperplane arrangements. We will use the de Concini-Procesi model of the complement of a projective hyperplane arrangement~\cite{DP95}, as discussed in \cite[Section 4.1]{MS15}. Let $\arr=\set{H_i}_{i=0}^n$ be an arrangement of $n+1$ hyperplanes in $\P_\C^{d}$, not all having a common intersection point, and let $\varX_\arr =\P_\C^d \setminus \bigcup_{H_i \in \arr} H_i$ be the complement of the arrangement. For each $i$, let $\ell_i$ be the homogeneous linear form such that $H_i=\set{z\in \P_\C^d\mid \ell_i(z) = 0}$. These define a map $\varX_\arr \to \torus{n}$ given by $z \mapsto (\ell_i(z))$ in homogeneous coordinates on $\torus{n}$. This map is injective, since no $z\in \varX_\arr$ lies on all hyperplanes by assumption, and induces an isomorphism of $\varX_\arr \cong \varY_\arr$, where $\varY_\arr$ is a subvariety of $\torus{n}$, see \cite[Proposition 4.1.1]{MS15} for details. By a theorem of Ardila and Klivans~\cite{AK06}, the tropicalization $Y_\arr = \Trop(\varY_\arr)$ is the support of the Bergman fan $\Sigma_{M_\arr}$ of the matroid $M_\arr$ associated to the arrangement $\arr$, see \cite[Sections 4.1--4.2]{MS15}.

First, Tevelev shows \cite[Theorem 1.5]{Tev07} that the variety $\varX_\arr$ is schön, it is clearly connected, and by \cite{Sha93}, its cohomology has a pure Hodge structure of Hodge-Tate type. Moreover, given a face $\sigma \in \Sigma_{M_\arr}$, the star fan of $\sigma$ corresponds to the complement of a hyperplane arrangement. By induction, this shows that complements of hyperplane arrangements are wunderschön.

Furthermore, it is shown in \cite{JSS19,JRS18} by an inductive argument that the Bergman fan $\Sigma_{M_\arr}$ of a matroid is a tropical homology manifold. Therefore, one can apply \cref{thm:main}, which gives us that $\varY_\arr$ is cohomologically tropical, i.e. the map $\ctind{\varY_{\arr},\Sigma} \colon H^\bullet (\comp{Y}_\arr) \to H^{\bullet}(\comp\varY_\arr)$ is an isomorphism.

In light of \cref{thm:openCT}, this can be compared with the main result of \cite{Zha13}, also independently proved in \cite{Sha-thesis}, showing that $H^\bullet(\varX_\arr)\cong H^\bullet(\tropX_\arr)$, however lacking explicit maps.

\subsection{Non-matroidal examples} \label{subsec:non-matroidal_example}
We say that a subvariety $\varX$ of a torus $\varT_N$ is \emph{non-matroidal} if the tropicalization  of $\varX$ is not a Bergman fanfold, up to isomorphisms of the lattice $N$.

We present an example of $\varX \subseteq \varT_N$ which is not a complement of a hyperplane arrangement yet is wunderschön, cohomologically tropical, and the tropicalization $\trop(\varX)$ is a tropical homology manifold.

The variety $\varX$ will be the complement of an arrangement of lines and a single conic in $\CP^2$. Let $[z_0:z_1: z_2]$ be homogeneous coordinates on $\CP^2$. Let $\varL_0$, $\varL_1$, and $\varL_2$ be the coordinate lines of $\CP^2$ so that $\varL_i $ is defined by $z_i = 0$. Let $\varL_3$ be defined by the linear form $z_0 - z_1 + z_2 = 0$ and let the conic $\varC$ be defined by $z_1^2 +z_2^2 - z_0z_1 - 2z_1z_2 = 0$. Let $\arr$ denote the union of $\varL_0, \dots, \varL_3, \varC$.

As depicted in \cref{fig:non-matroidal_example_varX}, note that $\varC$ is tangent to $\varL_1$ at the point $[1:0:0]$ where $\varL_1$ intersects $\varL_2$. Also the conic $\varC$ is tangent to $\varL_0$ at the intersection point $[0:1:1]$ with $\varL_3$. The conic also passes through the intersection point $[1:1:0]$ of $\varL_2$ and $\varL_3$.

\begin{figure}
    \begin{tikzpicture}

\begin{scope}[scale=1.2]
  \draw (0,-1) -- (0,1.5) node[above] {$\varL_1$};
  \draw (-.5,0) -- (1.5,0) node[right] {$\varL_2$};
  \draw (-.5,1.707) -- (1.707,-.5) node[below] {$\varL_0$};
  \draw (-.5,-1) -- (1.3,.8) node[right] {$\varL_3$};
  \draw (0,0) to[out=90, in=135, looseness=1.1] node[midway, below right=-2pt] {$\varC$} (1.707/2,.707/2) to[out=-45,in=22.5, looseness=1] (.5,0) to[out=-180+22.5,in=-90, looseness=1] cycle;
\end{scope}
    \end{tikzpicture}
    \caption{A very-affine variety $\varX$ which is not a complement of hyperplane arrangement and verifies the main theorem, see \cref{subsec:non-matroidal_example}. \label{fig:non-matroidal_example_varX}}
\end{figure}

Consider the map $\phi \colon \CP^2 \setminus \arr \to (\C^*)^4$ defined by
\[ [z_0: z_1:z_2] \mapsto (\~z_1, \~z_2, 1 - \~z_1 + \~z_2, \~z_1^2 +\~z_2^2 - \~z_1 - 2\~z_1\~z_2 ), \quad\text{ with $\~z_1 = \frac{z_i}{z_0}$ and $\~z_2 = \frac{z_2}{z_0}$.}\]
Let $\varX \subseteq (\C^*)^4$ denote the image of the map $\phi$. The space $\trop(\varX)$ is $2$-dimensional and is the support of the fan described below.

\begin{figure}
    \[ \begin{array}{cccccccc}
         0 & 1 & 2 & 3 & 4 & a & b &  c \\\hline
        -1 & 1 & 0 & 0 & 0 & 2 & 0 & -2 \\
        -1 & 0 & 1 & 0 & 0 & 1 & 1 & -2 \\
        -1 & 0 & 0 & 1 & 0 & 0 & 1 & -1 \\
        -2 & 0 & 0 & 0 & 1 & 2 & 1 & -2
    \end{array} \qquad \begin{array}{cc}
        \alpha & \beta \\\hline
        1 & -1 \\
        1 & -1 \\
        0 &  0 \\
        0 & -1
    \end{array} \qquad
    \begin{tikzpicture}[inner sep=1pt, baseline={(0,0)}]
        \node (0) at (-1,1) {0};
        \node (1) at (1,1) {1};
        \node (2) at (1,-1) {2};
        \node (3) at (-1,-1) {3};
        \node (4) at (0,.5) {4};
        \node (a) at (1,.5) {$a$};
        \node[gray] (alp) at (1,0) {$\alpha$};
        \node (b) at (0,-1) {$b$};
        \node (c) at (-1,.5) {$c$};
        \node[gray] (bet) at (-1,0) {$\beta$};
        \foreach \i/\j in {0/1,0/2,0/c,c/bet,bet/3,1/3,1/a,a/alp,alp/2,2/b,3/b} {
            \draw (\i) -- (\j);
        }
        \draw (4) edge (a) edge[bend left=15] (b) edge (c);
    \end{tikzpicture} \]
    \caption{The combinatorial structure of a non-Bergman fan verifying the main theorem described in \cref{subsec:non-matroidal_example}. \label{fig:non-matroidal_example_tropX}}
\end{figure}

The fan has $8$ rays in directions given in \cref{fig:non-matroidal_example_tropX}. Each ray is adjacent to exactly $3$ faces of dimension $2$ for a total of $12$ faces of dimension $2$. The structure is given in \cref{fig:non-matroidal_example_tropX}: we draw an edge between two vertices if the there is a face between the two corresponding rays. Note that to get a unimodular subdivision, one has to add some rays, for instance the rays $\alpha$ and $\beta$ of \cref{fig:non-matroidal_example_tropX}. We denote by $\Sigma$ this unimodular fan.

It can be verified in polymake that this fan is a tropical homology manifold and its tropical Betti numbers are $1,0,6,0,1$. For an alternative proof, note that the the fan $\Sigma$ is obtained by the process of tropical modification \cite{MR18} as follows. Let $\Sigma_{U_{3,4}} \subseteq \R^3$ be the Bergman fan of the uniform matroid $U_{3,4}$. Its rays are the rays $0,1,2$ and $3$ in \cref{fig:non-matroidal_example_tropX}, where we forget the fourth coordinate. Let $C \subseteq \Sigma_{U_{3,4}}$ be a tropical trivalent curve with rays $a,b,c$ (once again we forget the last coordinate). Then $\Sigma$ in $\R^4$ is obtained by a tropical modification of $\Sigma_{U_{3,4}}$ along $C$. By \cite{JRS18}, the Bergman fan $\Sigma_{U_{3,4}}$ is a tropical homology manifold, see also \cref{subsec:hyperplanecomps}. By \cite[Theorem 4.8]{Aks21} the trivalent tropical curve is also a tropical homology manifold. By \cite[Theorem 1.4]{AP-homology} the modification of $\Sigma_{U_{3, 4}} \subseteq \R^3$ along $C$ is a tropical homology manifold. The tools developed in this last article also allow to compute the cohomology of $\tropXbar$ quite easily, and to check that the fan is Kähler.

The compactification of $\varX$ in $\CP_{\Sigma}$ is given as follows. Consider $\CP^2$ blown up in the three points whose homogeneous coordinates are $[1:0:0], [0:1:1],$ and $[1:1:0]$. Then, in the blow up, the exceptional divisor above $[1:0:0]$, the proper transform of $\varC$, and the proper transform of $\varL_1$ all intersect in a single point. Similarly, there is a triple intersection of the exceptional divisor above $[0:1:1]$ and the proper transforms of $\varC$ and $\varL_0$. We further blow-up these two intersection points to obtain a surface $\varXbar$. The divisor $\varXbar \setminus \varX$ consists of the five exceptional divisors and the proper transforms of all curves in $\arr$. Therefore, $\dim H^2(\varXbar) = 6$ and $\dim H^0(\varXbar) = \dim H^4(\varXbar) = 1$ and $\dim H^k(\varXbar) = 0$ otherwise.

We claim that $\varX$ is wunderschön. Indeed, for each ray $\zeta$ of the fan $\Sigma$ the variety $\varXbar^\zeta$ is $\CP^1$ with two or three marked points corresponding to the intersections with the other divisors in $\varXbar \setminus \varX$, so it is wunderschön. Moreover, $\varX$ is non-singular and connected, and its cohomology is pure. Hence, $\varX$ is wunderschön.

In \cite{Aks24}, tropicalizations of complements of curve arrangements are considered more generally from the perspective of determining when they are cohomologically tropical. In particular, \cite[Example 7.4]{Aks24} gives an infinite family of non-matroidal cohomologically tropical varieties of dimension two.

 Other examples of cohomologically tropical varieties include the moduli spaces $M(3,6)$ and $M(3,7)$ of 6 and 7 lines in $\CP^2$, respectively, see~\cite{Lu08, Sch21, CL23}. 

\smallskip

By the tropical and classical Künneth formula, the product of two cohomologically tropical varieties is again cohomologically tropical. Moreover, in a product $\varX_1 \times \varX_2 \subset \varT_{N_1} \times \varT_{N_2}$, if one of the factors is non-matroidal, the product remains non-matroidal (because the same statement holds for a product of fanfolds, using the fact that local fanfolds of a Bergman fanfold are all Bergman).

 Using the family of non-matroidal examples above, as well as the complements of hyperplane arrangements in any dimension, we obtain infinite families of non-matroidal cohomologically tropical varieties in arbitrary dimension at least two.

\subsection{Maximal degenerations} \label{sec:discussion-max-degenerate} Motivated by the work of Deligne~\cite{Del-md} and our results Theorem~\ref{thm:globalization} and Theorem~\ref{thm:max-degenerate}, we can ask the following question.
\begin{question} Is there a tropical geometric characterization of maximally degenerate families of complex algebraic varieties?
Is it true that those families in which the open strata of special fibers have a cohomology which is pure of Hodge-Tate type are exactly those covered by our Theorem~\ref{thm:globalization}? 
\end{question}

These questions are intimately related to open problems on large scale limits of families of complex algebraic varieties. For instance, a recent work by Yang Li~\cite{Li20} reduces the metric SYZ conjecture in maximally degenerate families of complex algebraic varieties to the existence of solutions to a tropical Monge-Ampère equation, once this equiation has been properly formulated. For those degenerations appearing in Theorem~\ref{thm:globalization}, our results show that the corresponding tropical variety is Kähler in the sense of~\cite{AP-tht} and moreover recovers the geometry of the degenerate fiber as well as the limit Hodge-theoretic geometry of the family.  In a forthcoming work~\cite{AP-MA}, a differential calculus on tropical varieties is developed  that combines Chow rings of local tropical fans with real differential forms on the variety. Tropical Hodge theory can be then used to properly formulate the Monge-Ampère equation on the tropicalization using tropical Kähler forms. This allows to formulate a tropical analogue of the Monge-Ampère equation, and study its solutions. 
 
 Another instance of large scale asymptotic problem is a conjecture by Kontsevich and Soibelman~\cite{KS06} that predicts the convergence of the normalized Calabi metrics in maximally degenerate families of Calabi-Yau varieties to a tropical Calabi metric (once this has been properly defined) on the limit tropical variety. 

\subsection{$\mT$-stability} \label{sec:shellability}

It seems plausible that a framework parallel to the one in~\cite{AP-hodge-fan} can be developed for properties of tropicalization of algebraic varieties. The properties discussed in this paper concern pairs consisting of a subvariety of an algebraic torus and a fan structure on its tropicalization. Three basic operations can be conducted on these pairs: products, blow-ups and blow-downs, and taking the graph of a holomorphic function on the subvariety, restricted to the complement of its divisor. For example the cases described in \cref{subsec:hyperplanecomps,subsec:non-matroidal_example} can both be obtained by these operations. The properties of being schön, wunderschön, and cohomologically tropical should be $\mT$-stable in this framework. We refer to~\cite{Sch21} for some results in this direction.

\printbibliography

\end{document}